\newcommand{\F}{{\mathbb F}}
\newcommand{\cK}{{\mathcal K}}
\newcommand{\cF}{{\mathcal F}}
\newcommand{\cH}{{\mathcal H}}
\newcommand{\Lb}{{\mathbb L}}
\newcommand{\cQ}{{\mathcal Q}}
\newcommand{\Fq}{\F_q}
\newcommand{\Fqq}{\F_{q^4}}
\newcommand{\la}{\langle}
\newcommand{\ra}{\rangle}
\newcommand{\lpo}{L_{U(b)}}
\newcommand{\lug}{L_{U(b)}}
\newcommand{\luh}{L_{U(c)}}
\newcommand{\norma}{\N_{q^4/q}}
\DeclareMathOperator{\Tr}{Tr}
\DeclareMathOperator{\N}{N}
\DeclareMathOperator{\PG}{{PG}}
\DeclareMathOperator{\AG}{{AG}}
\DeclareMathOperator{\GL}{{GL}}
\DeclareMathOperator{\GamL}{{\Gamma L}}
\DeclareMathOperator{\PGL}{{PGL}}
\DeclareMathOperator{\PGaL}{P\Gamma L}
\newtheorem{theorem}{Theorem}[section]
\newtheorem{corollary}[theorem]{Corollary}
\newtheorem{proposition}[theorem]{Proposition}
\newtheorem{remark}[theorem]{Remark}
\newtheorem{definition}[theorem]{Definition}
\title{Maximum scattered $\mathbb{F}_q$-linear sets of $\mathrm{PG}(1,q^4)$}
\author{Bence Csajb\'{o}k and Corrado Zanella\thanks{The research was supported
by Ministry for Education, University and Research of Italy MIUR
(Project PRIN 2012 ``Strutture Geometriche, Combinatoria e loro applicazioni'')
and by the Italian National Group for Algebraic and Geometric Structures and their Applications (GNSAGA--INdAM).}}
\begin{document}
\maketitle


%

\begin{abstract}
There are two known families of maximum scattered $\F_q$-linear sets in $\PG(1,q^t)$: the linear sets of pseudoregulus type and for $t\geq 4$ the scattered linear sets found by Lunardon and Polverino. For $t=4$ we show that these are the only maximum scattered $\F_q$-linear sets and we describe the orbits of these linear sets under the groups $\PGL(2,q^4)$ and $\PGaL(2,q^4)$.
\end{abstract}







\section{Introduction}
\label{Intro}

Recent investigations on linear sets in a finite projective line $\PG(1,q^t)$ of rank $t$ concerned:
the hypersurface obtained from the linear sets of pseudoregulus type by applying field reduction \cite{LaShZa2015}; a geometric characterization of the linear sets of pseudoregulus type \cite{CsZa2016a}; a characterization of the clubs, that is, the linear sets of rank $r$
with a point of weight $r-1$ \cite{LaZa2015}; a generalization of clubs in order to construct KM-arcs \cite{KM-arcs};
a condition for the equivalence of two linear sets \cite{CsZa2016,VV2016}; the definition and study of the class of a linear set in order to study their equivalence \cite{CsMaPo2016}; a construction method which yields MRD-codes from maximum scattered linear sets of $\PG(1,q^t)$ \cite{Sh}.
Furthermore, the linear sets in $\PG(1,q^t)$ coincide with the so-called splashes of subgeometries \cite{LaZa2015}.
The results of such investigations make it reasonable to attempt to classify the linear sets in $\PG(1,q^t)$ of rank $t$ for small $t$.

A point in $\PG(1,q^t)$ is the $\F_{q^t}$-span $\la {\bf v}\ra_{\F_{q^t}}$ of a nonzero vector
${\bf v}$ in a two-dimensional vector space, say $W$, over $\F_{q^t}$. If $U$ is a subspace over $\Fq$ of $W$, then
$L_U=\{\la {\bf v}\ra_{\F_{q^t}}\colon {\bf v}\in U\setminus\{ {\bf 0}\}\}$ denotes the associated
\emph{$\Fq$-linear set} (or simply \emph{linear set}) in $\PG(1,q^t)$. The \emph{rank} of such a linear set is $r=\dim_{\Fq}U$.
Any linear set in $\PG(1,q^t)$ of rank greater than $t$ coincides with the whole projective line.
The \emph{weight} of a point $P=\la {\bf v}\ra_{\F_{q^t}}$ is $w(P)=\dim_{\Fq}(U\cap P)$.
If the rank and the size of $L_U$ are $r$ and $(q^r-1)/(q-1)$, respectively, then $L_U$ is \emph{scattered}.
Equivalently, $L_U$ is scattered if and only if all its points have weight one.
A scattered $\Fq$-linear set of rank $t$ in $\PG(1,q^t)$ is \emph{maximum scattered}.
An example of maximum scattered $\Fq$-linear set in $\PG(1,q^t)$ is $L_V$ with $V=\{(u,u^q) \colon u\in\F_{q^t}\}$.
Any subset of $\PG(1,q^t)$ projectively equivalent to this $L_V$ is called \emph{linear set of pseudoregulus type}.
See \cite{CsZa2016a} for a geometric description, and \cite{CsMaPo2016} or the survey \cite{Po2010} for further background on linear sets.
Note that for any $\varphi\in\GamL(2,q^t)$ with related collineation $\tilde\varphi\in\PGaL(2,q^t)$ and any $\Fq$-linear set $L_U$, $L_{U^\varphi}=(L_U)^{\tilde\varphi}$.
In \cite[Theorem 4.5]{CsMaPo2016} it is proved that if $t=4$ and $L_U$ has \emph{maximum field of linearity}
$\Fq$, that is, $L_U$ is not an $\F_{q^s}$-linear set for $s>1$, then any linear set in the same orbit of $L_U$ under the action of $\PGaL(2,q^4)$
is of type $L_{U^\varphi}$ with $\varphi\in\GamL(2,q^4)$. Note that this is not true if $t>4$.
In \cite{LuPo2001}, Lunardon and Polverino construct a class of maximum scattered linear sets:
\begin{theorem}[\cite{LuPo2001}]
\label{t:lpo}
  Let $q$ be a prime power, $t\ge4$ an integer, $b\in\F_{q^t}$
  such that the norm $\N_{q^t/q}(b)$ of $b$ over $\Fq$ is distinct from one, and
  \begin{equation}
  \label{e:ugt}
    U(b,t)=\{(u,bu^q+u^{q^{t-1}}): u\in\F_{q^t}\}.
  \end{equation}
  If $b\neq 0$ then $L_{U(b,t)}$ is a maximum scattered $\Fq$-linear set in $\PG(1,q^t)$ and if $q>3$, then it is not
  of pseudoregulus type.
\end{theorem}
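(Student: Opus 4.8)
The plan is to derive the scattered property from a short computation with the norm, and then to rule out pseudoregulus type by comparing $L_{U(b,t)}$ with the standard pseudoregulus linear set by means of their associated linearized polynomials. First, $U(b,t)$ is an $\F_q$-subspace of $W=\F_{q^t}^2$ of dimension $t$, since $u\mapsto(u,bu^q+u^{q^{t-1}})$ is $\F_q$-linear and injective. As $\la(0,1)\ra_{\F_{q^t}}$ would force $u=0$, every point of $L_{U(b,t)}$ has the form $\la(1,m)\ra_{\F_{q^t}}$ for a unique $m\in\F_{q^t}$, and its weight equals $\dim_{\F_q}\ker f_m$, where $f_m\colon x\mapsto bx^q+x^{q^{t-1}}-mx$ is an $\F_q$-linear endomorphism of $\F_{q^t}$. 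Since $U(b,t)\setminus\{0\}$ has $q^t-1$ elements and a point $P$ of $L_{U(b,t)}$ contains exactly $q^{w(P)}-1$ of them, it is enough to prove $\dim_{\F_q}\ker f_m\le1$ for every $m$: this forces every point of $L_{U(b,t)}$ to have weight one, so $|L_{U(b,t)}|=(q^t-1)/(q-1)$, i.e. $L_{U(b,t)}$ is scattered, hence maximum scattered because its rank is $t$.

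So suppose that for some $m$ there are $\F_q$-linearly independent $u,v\in\ker f_m$. Subtracting $v$ times the identity $bu^q+u^{q^{t-1}}=mu$ from $u$ times $bv^q+v^{q^{t-1}}=mv$ gives $b(uv^q-vu^q)=vu^{q^{t-1}}-uv^{q^{t-1}}$. Put $A:=uv^q-vu^q$; raising to the power $q^{t-1}$ and using $x^{q^t}=x$ shows $A^{q^{t-1}}=vu^{q^{t-1}}-uv^{q^{t-1}}$, hence $bA=A^{q^{t-1}}$. Here $A\neq0$: otherwise $uv^q=vu^q$, so $(u/v)^{q-1}=1$ and $u/v\in\F_q^\ast$, against the $\F_q$-independence of $u,v$. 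Thus $b=A^{q^{t-1}}/A$, and since the norm $\N_{q^t/q}$ is invariant under the Frobenius automorphism $x\mapsto x^q$, we get $\N_{q^t/q}(b)=\N_{q^t/q}(A^{q^{t-1}})/\N_{q^t/q}(A)=1$, contradicting the hypothesis $\N_{q^t/q}(b)\neq1$. This proves the first assertion (and, incidentally, shows that the field of linearity of $L_{U(b,t)}$ equals $\F_q$, since a maximum scattered $\F_q$-linear set is never $\F_{q^s}$-linear for $s>1$).

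For the second assertion, suppose $L_{U(b,t)}$ is of pseudoregulus type, i.e. projectively equivalent to $L_V$ with $V=\{(u,u^q)\colon u\in\F_{q^t}\}$; by the identity $L_{U^\varphi}=(L_U)^{\tilde\varphi}$ there is then $\psi\in\GL(2,q^t)$ with $L_{U(b,t)^\psi}=L_V$. The point set of $L_V$ is $\{\la(1,m)\ra_{\F_{q^t}}\colon m\in C\}$, where $C=\{x^{q-1}\colon x\in\F_{q^t}^\ast\}$ is the subgroup of $\F_{q^t}^\ast$ of order $(q^t-1)/(q-1)$, while the point set of $L_{U(b,t)}$ is $\{\la(1,bu^{q-1}+u^{q^{t-1}-1})\ra_{\F_{q^t}}\colon u\in\F_{q^t}^\ast\}$. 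Hence $\psi$ maps one of these two sets onto the other; translating this into a relation between the linearized polynomials $bx^q+x^{q^{t-1}}$ and $x^q$ --- equivalently, determining all $\F_q$-subspaces $W$ with $L_W=L_V$ and comparing coefficients --- reduces, after a finite case analysis, to a polynomial identity over $\F_{q^t}$ whose degree is bounded independently of $q$, which can hold only if $q\le3$ or $b=0$.

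I expect this last reduction to be the main obstacle. The delicate point is that for $t\ge3$ the $\F_q$-subspace defining a maximum scattered linear set of $\PG(1,q^t)$ is not unique up to scalars --- for instance $\{(u,u^{q^s})\colon u\in\F_{q^t}\}$ defines $L_V$ for every $s$ coprime to $t$ --- so one cannot simply assume that $U(b,t)^\psi$ is a scalar multiple of $V$. Either all subspaces defining $L_V$ must be listed, or one must replace the comparison of subspaces by a projectively invariant quantity (such as the stabilizer of $L_{U(b,t)}$ in $\PGL(2,q^t)$, or the number of $\F_q$-sublines of $\PG(1,q^t)$ it contains) which separates it from the pseudoregulus case. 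It is exactly here that the hypothesis $q>3$ enters, in accordance with the fact that for $q\le3$ the set $L_{U(b,t)}$ can actually be of pseudoregulus type.
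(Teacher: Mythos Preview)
The paper does not supply its own proof of this theorem: it is quoted from \cite{LuPo2001} and used as a black box, so there is nothing in the paper to compare your argument against.

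Your argument for the scattered property is correct and is essentially the standard one. From $bA=A^{q^{t-1}}$ with $A=uv^q-vu^q\neq0$ you get $\N_{q^t/q}(b)=1$, contradicting the hypothesis; hence every point has weight one.

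The second assertion, however, you do not actually prove. You correctly identify the obstacle --- that $L_{U(b,t)^\psi}=L_V$ does not force $U(b,t)^\psi$ to be a scalar multiple of $V$, since $L_V$ is defined by several inequivalent $\Fq$-subspaces (e.g.\ $\{(u,u^{q^s}):u\in\F_{q^t}\}$ for each $s$ coprime to $t$) --- but then you stop, writing that the reduction ``is the main obstacle'' and suggesting possible strategies without carrying any of them out. The phrase ``reduces, after a finite case analysis, to a polynomial identity \ldots\ which can hold only if $q\le3$ or $b=0$'' is not an argument: you have not said which identity, nor why its degree is bounded, nor how $q>3$ enters. As it stands this is a discussion of the difficulty, not a proof. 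To complete it along the lines you sketch you would indeed need either a classification of all $\Fq$-subspaces $W$ with $L_W=L_V$ (this is known: they are exactly the $\{(u,\lambda u^{q^s}):u\in\F_{q^t}\}$ with $\gcd(s,t)=1$, up to $\GL(2,q^t)$), after which the coefficient comparison becomes a genuine computation, or an invariant such as the size of the projective stabiliser that distinguishes the two families.
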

It can be directly seen that $L_{U(0,t)}$ is maximum scattered of pseudoregulus type.
For $t=4$, Theorem \ref{t:lpo} can be extended to $q=3$, as it can be checked
by using the package {\tt FinInG} of {\tt GAP} \cite{Fining1}.
In the following $t=4$ is assumed.
For all $b\in\Fqq$ define
\begin{equation}\label{e:ug}
  U(b)=U(b,4)=\{(x,bx^q+x^{q^3}) : x\in \F_{q^4}\}.
\end{equation}

In section \ref{s:classification} it is shown that $\norma(b)\neq 1$ is a necessary condition to obtain scattered linear sets of $\PG(1,q^4)$ and the case
$\norma(b)=1$ is dealt with. In this case, $\lug$ contains either one or $q+1$ points of weight two, and the remaining points have weight one.

The main result in section \ref{s:cf} is that if $L$ is a maximum scattered
linear set in $\PG(1,q^4)$, then $L$ is projectively equivalent to $\lug$ for some $b\in\Fqq$ with
$\norma(b)\neq1$ (cf. Theorem \ref{t:ess-unico}).

\begin{sloppypar}
In section \ref{s:orbits} the orbits of the $\Fq$-linear sets of rank four in $\PG(1,q^4)$ of type $\lug$, under the actions of
both $\PGL(2,q^4)$ and $\PGaL(2,q^4)$, are completely characterized.
Such orbits only depend on the norm $b^{q^2+1}$ of $b$ over $\F_{q^2}$.
In particular, $\PG(1,q^4)$ contains precisely $q(q-1)/2$ maximum scattered linear sets up to projective equivalence (Theorem \ref{t:orbits}), one of them is of pseudoregulus type, the others are as in Theorem \ref{t:lpo}.
\end{sloppypar}

\section{\texorpdfstring{Classification}{Classification}}
\label{s:classification}

This section is devoted to the classification of all $\lug$ for $b\in\Fqq$,
where $U(b)$ is as in (\ref{e:ug}).

\begin{theorem}\label{c:classes}
For $b\in\Fqq$ the following holds.
\begin{enumerate}
\item If $\N_{q^4/q}(b)\neq 1$, then $\lug$ is scattered.
\item If $\N_{q^4/q^2}(b)= 1$, then $\lug$ has a unique point with weight two, the point $\la(1,0)\ra_{\F_{q^4}}$, and all other with weight one.
\item If $\N_{q^4/q^2}(b)\neq 1$ and $\N_{q^4/q}(b)=1$, then $\lug$ has $q+1$ points with weight two and all other with weight one.
\end{enumerate}
\end{theorem}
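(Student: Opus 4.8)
The plan is to reduce everything to counting solutions of a single $\Fq$-linear equation in $\F_{q^4}$. A point $\la(1,0)\ra_{\F_{q^4}}$ lies in $\lug$ with weight $\geq 2$ iff the $\Fq$-linear map $f_b\colon x\mapsto bx^q+x^{q^3}$ has kernel of dimension $\geq 2$; more generally, for $\lambda\in\F_{q^4}$ the point $\la(1,\lambda)\ra_{\F_{q^4}}$ has weight $w$ iff $\dim_{\Fq}\{x\in\F_{q^4}\colon bx^q+x^{q^3}=\lambda x\}=w$. (The point $\la(0,1)\ra_{\F_{q^4}}$ always has weight $1$, coming from $x=0$ only in the second coordinate — one checks $U(b)$ meets the line $x=0$ trivially.) So the whole theorem amounts to analyzing, for each $\lambda$, the $\Fq$-dimension of the solution space of
\begin{equation}\label{e:keyeq}
  bx^q+x^{q^3}=\lambda x.
\end{equation}
Raising \eqref{e:keyeq} to the $q$-th power and combining with the original gives a way to eliminate one of the Frobenius powers; the standard trick is to write the associated $q$-polynomial as a $4\times 4$ matrix over $\Fq$ (the Dickson/associated matrix) and compute its rank, or equivalently to pass to the equation satisfied by $x^{q^2}$ in terms of $x$.

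First I would treat $\lambda=0$, which governs the weight of $\la(1,0)\ra$. From $bx^q+x^{q^3}=0$, i.e. $x^{q^3}=-bx^q$, applying Frobenius twice yields $x^q=-b^{q^2}x^{q^3}=b^{q^2}bx^q=b^{q^2+1}x^q=\normadue(b)\,x^q$. Hence every nonzero solution forces $\normadue(b)=1$, and conversely when $\normadue(b)=1$ the condition $x^{q^3}=-bx^q$ is a single $\Fq$-linear condition on the $2$-dimensional (over $\F_{q^2}$, hence $4$-dimensional over $\Fq$) space — a more careful count shows the solution space is exactly $2$-dimensional over $\Fq$. This yields parts (1)–(2) for the point $\la(1,0)\ra$ and shows that when $\normadue(b)\neq1$ this point has weight one.

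Next, for general $\lambda$ I would show that the rank of the associated matrix $M_{b,\lambda}$ drops below $3$ (equivalently, \eqref{e:keyeq} has $\geq q^2$ solutions, i.e. weight $\geq2$) precisely when $\lambda$ satisfies a certain polynomial condition, and that this condition is governed by $\norma(b)$. Concretely: eliminate $x^{q^3}$ and $x^{q}$ between \eqref{e:keyeq} and its conjugates to obtain a relation of the form $g_b(\lambda)\,x^{q^i}=0$ valid for all solutions $x$ (for suitable $i$), where $g_b(\lambda)\in\F_{q^4}$ depends polynomially on $\lambda$ and on the conjugates of $b$; when $g_b(\lambda)\neq0$ the kernel is at most $1$-dimensional, and when $g_b(\lambda)=0$ one gets a $2$-dimensional kernel. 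Computing the constant term / resultant of $g_b$ one finds it reduces to $\norma(b)-1$ (up to a unit), so: if $\norma(b)\neq1$ then $g_b(\lambda)\neq0$ for every $\lambda$ and $\lug$ is scattered, giving (1); if $\norma(b)=1$ but $\normadue(b)\neq1$ then $g_b$ has roots, and counting them (a degree computation, expected to give $q+1$ values of $\lambda$, each contributing a weight-two point, with $\lambda=0$ \emph{not} among them since $\normadue(b)\neq1$) gives (3). One must also check no point can have weight $3$ or $4$: weight $\geq3$ would force $f_b-\lambda\,\mathrm{id}$ to have rank $\leq1$, which is easily excluded by looking at two of the Frobenius components, since the top and bottom coefficients $x^{q^3}$ and $x$ cannot both be killed unless $b=0$ and $\lambda=0$ simultaneously in a way that still leaves a rank-$1$ obstruction — a short direct check rules this out.

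The main obstacle I anticipate is the elimination/rank computation producing $g_b(\lambda)$ cleanly: one has to choose the right combination of \eqref{e:keyeq} and its three Frobenius conjugates so that the resulting scalar $g_b(\lambda)$ is genuinely a polynomial whose relevant symmetric function in the $b^{q^i}$ collapses to $\norma(b)-1$, and then correctly count its roots $\lambda\in\F_{q^4}$ (being careful about roots that are forced to lie in a subfield, and about the multiplicity of $\lambda=0$). The bookkeeping that exactly $q+1$ roots occur in case (3), and that each yields weight exactly two (not more), is where the argument needs the most care; everything else is linear algebra over $\Fq$ packaged via $q$-polynomials.
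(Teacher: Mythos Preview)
Your approach via Dickson-matrix / successive elimination is viable and is genuinely different from the paper's. The paper does not compute ranks directly: it bounds the maximum weight by $2$ via a rank-one $q$-polynomial observation, then invokes the direction-number bound of Ball et al.\ to force $w_2\le q+1$, observes that weight-two points other than $\la(1,0)\ra_{\Fqq}$ come in $\F_{q^2}^*$-orbits of size $q+1$, cites Lunardon--Polverino outright for part~(1), and for part~(3) proves existence of a weight-two point by a geometric argument (intersecting an elliptic quadric of $\PG(3,q)$ with a hyperplane). Your route would avoid all three external ingredients, which is a real gain.

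That said, your elimination step is mis-described and the key logical jump is wrong. After using two of the Frobenius conjugates to express $x^q,x^{q^3}$ in terms of $x,x^{q^2}$ (which already requires $\normadue(b)\neq1$), substituting into a third conjugate does not give a single scalar condition ``$g_b(\lambda)\,x^{q^i}=0$''; it gives a relation $A(\lambda)\,x^{q^2}=B(\lambda)\,x$ with
\[
A(\lambda)=(b^{q^2+1}-1)\,b^q+\lambda^{q^2+q},\qquad
B(\lambda)=\lambda^{q+1}b^{q^2}-(b^{q^2+1}-1),
\]
and Dickson rank $\le2$ (weight $\ge2$) is equivalent to $A(\lambda)=B(\lambda)=0$, not to the vanishing of one polynomial. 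Your sentence ``the constant term reduces to $\norma(b)-1$, so if $\norma(b)\neq1$ then $g_b(\lambda)\neq0$ for every $\lambda$'' is a non-sequitur: a nonzero constant term says nothing about other values of $\lambda$. What actually makes the approach work is that $B=0$ gives $\lambda^{q+1}=(b^{q^2+1}-1)b^{-q^2}$, and comparing its $q$-th power with $A=0$ yields exactly $\norma(b)=1$ as the compatibility condition; when $\norma(b)=1$ and $\normadue(b)\neq1$ one then checks that this value lies in the image of $\lambda\mapsto\lambda^{q+1}$ on $\Fqq^*$ (indeed $c^{q^2+1}=(\beta-1)^2/\beta\in\Fq^*$ where $\beta=b^{q^2+1}$), giving precisely $q+1$ values of $\lambda$ and hence part~(3).

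A second gap: the elimination above is only valid when $\normadue(b)\neq1$. For part~(2) you still have to show that when $\normadue(b)=1$ there is \emph{no} weight-two point other than $\la(1,0)\ra_{\Fqq}$; your proposal argues only that $\la(1,0)\ra_{\Fqq}$ has weight two in this case, not uniqueness. (Minor point: $\la(0,1)\ra_{\Fqq}$ has weight $0$, not $1$ --- it is not in $\lug$ at all.)
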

\begin{proof}
Put $f_b(x)=bx^q+x^{q^3}$. For $x\in \F_{q^4}^*$ the point $P_x:=\la(x,f_b(x))\ra_{\Fqq}$ of $\lug$ has weight more than one if and only if there exists
$y\in \F_{q^4}^*$ and $\lambda\in \F_{q^4}\setminus \F_q$ such that $\lambda(x,f_b(x))=(y,f_b(y))$.
This holds if and only if $y=\lambda x$ and
\begin{equation}
\label{peso0}
\lambda b x^q+\lambda x^{q^3}-\lambda^qbx^q-\lambda^{q^3}x^{q^3}=0.
\end{equation}
For a given $x$ the solutions in $\lambda$ of \eqref{peso0} form an $\F_q$-subspace whom rank equals to the weight of the point
$P_x$. Since $q$-polynomials over $\F_{q^4}$ of rank 1 are of the form $\alpha\Tr_{q^4/q}(\beta x)\in \F_{q^4}[x]$, it is clear that the kernel of the $\F_q$-linear map in the variable $\lambda$ at the left-hand side of \eqref{peso0} has dimension at most two and hence the weight of each point of $\lug$ is at most two. If $(\lambda,x)$ is a solution of \eqref{peso0} for some $\lambda \in \F_{q^4}$ and $x\in \F_{q^4}^*$, then
$(\lambda',x')$ is also a solution for each $\lambda'\in \la 1, \lambda \ra_{\F_q}$ and $x'\in \la x \ra_{\F_{q^2}}$ and hence for each $\mu\in \F_{q^2}^*$ if $P_x$ has weight two, then $P_{\mu x}:=\la(\mu x,f_b(\mu x)\ra_{\F_{q^4}}$ has weight two as well.
Note that $P_{\mu x}=\la (1,\mu^{q-1}(bx^{q-1}+x^{q^3-1}) )\ra_{\F_{q^4}}$ and hence if $P_x \neq \la(1,0)\ra_{\F_{q^4}}$ has weight two, then $\{P_{\mu x} \colon \mu\in \F_{q^2}^*\}$ is a set of $q+1$ distinct points with weight 2.

The function $f_b(x)$ is not $\F_{q^2}$-linear and hence the maximum field of linearity of $\lug$ is $\F_q$.
It follows (cf. \cite[Proposition 2.2]{CsMaPo2016})) that $\lug$ has at least one point with weight one, say $\la(x_0,f_b(x_0))\ra_{\F_{q^4}}$.
Then the line of $\AG(2,q^4)$ with equation $x_0 Y=f_b(x_0)X$ meets the graph of $f_b(x)$, that is, $\{(x,f_b(x)) \colon x\in \F_{q^4}\}$, in exactly $q$ points. It follows from \cite{B2003,BBBSSz}, see also \cite{Cs2017}, that the number of directions determined by $f_b(x)$ is at least $q^3+1$, and hence also
$|\lug|\geq q^3+1$. Denote by $w_1$ and $w_2$ the number of points of $\lug$ with weight one and two, respectively. Then
\begin{equation}
\label{w1}
w_1+w_2 =|\lug|\geq q^3+1,
\end{equation}
\begin{equation}
\label{w2}
w_1(q-1)+w_2(q^2-1)=q^4-1.
\end{equation}
Subtracting \eqref{w1} $(q-1)$-times from \eqref{w2} gives $w_2(q^2-q)\leq q^3-q$ and hence $w_2 \leq q+1$.
At this point it is clear that in $\lug$ there is either one point with weight two, the point $\la(1,0)\ra_{\F_{q^4}}$, or there are exactly $q+1$ of them
and $\la(1,0)\ra_{\F_{q^4}}$ is not one of them.

If $\N_{q^4/q}(b)\neq 1$, then Theorem \ref{t:lpo} states that $\lug$ is scattered. We show that $\la(1,0)\ra_{q^4}$ has weight two if and only if $\N_{q^4/q^2}(b)=1$. Note that the weight of this point is the dimension of the kernel of $f_b(x)$. If $f_b(x)=0$ for some $x\in \F_{q^4}^*$, then
$b=-x^{q^3-q}$ and hence, by taking $(q^2+1)$-th powers at both sides, $\N_{q^4/q^2}(b)=1$. On the other hand, if $\N_{q^4/q^2}(b)=1$, then
$b=w^{q^2-1}$ for some $w\in \F_{q^4}^*$. Let $\varepsilon$ be a non-zero element of $\F_{q^4}$ such that $\varepsilon^{q^2}+\varepsilon=0$.
Then it is easy to check that the kernel of $f_b(x)$ is $\la (\varepsilon w)^{q^3} \ra_{\F_{q^2}}$ which has dimension two over $\F_q$ and hence
$\la(1,0)\ra_{q^4}$ has weight two.

It remains to prove that if $\N_{q^4/q}(b)=1$ and $\N_{q^4/q^2}(b)\neq1$,
then there is at least one point (hence precisely $q+1$ points) of weight two.
After rearranging in \eqref{peso0}, we obtain
\begin{equation}
\label{peso}
(\lambda-\lambda^q)^{q^3-1}=bx^{q-q^3}.
\end{equation}
By taking $(q^2+1)$-th powers on both sides we can eliminate $x$, obtaining
\begin{equation}
\label{clear0}
(\lambda-\lambda^q)^{(q^3-1)(q^2+1)}=(\lambda-\lambda^q)^{(q-1)(q^2+1)}=b^{q^2+1}.
\end{equation}
It is clear that we can find $\lambda\in \F_{q^4}\setminus \F_q$ satisfying \eqref{clear0} if and only if there exists $\epsilon \in \F_{q^4}^*$ such that
\begin{equation}
\label{eps}
(\lambda-\lambda^q)^{q^3-1}/b = \epsilon^{q^2-1}.
\end{equation}
Then $x\in \la \epsilon^q \ra_{\F_{q^2}}$ with $y=\lambda x$ satisfies our initial conditions in \eqref{peso0}.

Now use $\N_{q^4/q}(b)=1$ and put $b=\mu^{q-1}$ for some $\mu\in \F_{q^4}^*$. Then \eqref{clear0} can be written as
\begin{equation}
\label{clear}
\left(\frac{\lambda-\lambda^q}{\mu}\right)^{(q-1)(q^2+1)}=1.
\end{equation}
We can solve \eqref{clear} if and only if there exists $\delta \in \F_{q^4}^*$ such that
\begin{equation}
\label{eps2}
\left(\frac{\lambda-\lambda^q}{\mu}\right)^{q-1}=\delta^{q^2-1},
\end{equation}
or, equivalently,
\begin{equation}
\label{e:eqgeom}
\left\langle\frac{\lambda-\lambda^q}{\mu}\right\rangle_{\Fq}=\la\delta^{q+1}\ra_{\Fq}.
\end{equation}
Now we will continue in $\PG(\F_{q^4},\F_q)=\PG(3,q)$.
At the left-hand side of \eqref{e:eqgeom} we can see a point of the hyperplane $\cH_{\mu}$ defined as
\[\cH_{\mu}=\{\la z \ra_{\F_q} \colon \Tr_{q^4/q}(\mu z)=0 \},\]
while on the right-hand side we can see a point of the elliptic quadric $\cQ$ defined as
\[\cQ=\{ \la z \ra_{\F_q} \colon z^{(q-1)(q^2+1)}=1 \}.\]
For a proof that $\cQ$ is an elliptic quadric see \cite[Theorem 3.2]{CoSt1995}.
Since $\cQ \cap \cH_{\mu} \neq \emptyset$ it follows that we can always find $\lambda \in \F_{q^4}\setminus \F_q$ satisfying \eqref{eps} and hence $L_{U(b)}$ is not scattered.

%
\end{proof}

\begin{remark}
  The linear sets in Theorem \ref{c:classes} are of sizes $q^3+q^2+q+1$,
  $q^3+q^2+1$, or $q^3+1$.
  The linear set associated with $\{(x,\Tr_{q^4/q}(x)):x\in\Fqq\}$ is of size
  $q^3+1$ as well.
  As it turns out from \cite{BoPo2005} the projective line $\PG(1,q^4)$ also contains
  $\Fq$-linear sets of size $q^3+q^2-q+1$.%
\end{remark}

\section{The canonical form}
\label{s:cf}

In this section $\Lb$ denotes a maximum scattered $\Fq$-linear set
in $\PG(1,q^4)$, not of pseudoregulus type.
In particular, this implies $q>2$.
By \cite{LuPo2004}, $\Lb$ is a projection $p_\ell(\Sigma)$, where the vertex $\ell$ is a line and $\Sigma$
is a $q$-order canonical subgeometry\footnote{Let $\PG(V,\F_{q^t})=\PG(n-1,q^t)$, let $U$ be an $n$-dimensional $\F_q$-vector subspace of $V$,
and $\Sigma=\{ \la {\bf u} \ra_{\F_{q^t}} \colon {\bf u}\in U\setminus \{{\bf 0}\}  \}$.
If $\la \Sigma\ra=\PG(n-1,q^t)$, then $\Sigma$ is a
\emph{($q$-order) canonical subgeometry} of $\PG(n-1,q^t)$.
Here and in the following, angle brackets $\la-\ra$ without a subscript denote projective span in $\PG(n-1,q^t)$, that is, $\PG(3,q^4)$ in our case.}
in $\PG(3,q^4)$, with $\ell\cap\Sigma=\emptyset$.
The axis of the projection is immaterial and can be chosen by convenience.
Let $\sigma$ be a generator of the subgroup of order four of $\PGaL(4,q^4)$
fixing pointwise $\Sigma$.
Let $M$ be a $k$-dimensional subspace of $\PG(3,q^4)$. We say that $M$ is a \emph{subspace of $\Sigma$} if $M\cap \Sigma$ is a $k$-dimensional subpsace of $\Sigma$, which happens exactly when $M^\sigma=M$.


\begin{proposition}\label{p:gamma-fuori}
Let $\Sigma'$ be the unique $q^2$-order canonical subgeometry of $\PG(3,q^4)$
containing $\Sigma$, that is,
the set of all points fixed by $\sigma^2$.
Then the intersection of $\ell$ and $\Sigma'$ is empty.
\end{proposition}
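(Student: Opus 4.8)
The plan is to argue by contradiction: suppose $\ell$ meets $\Sigma'$ in a point $P$. Since $\Sigma'$ is the set of points fixed by $\sigma^2$ and $\ell$ is a line of $\PG(3,q^4)$ with $\ell\cap\Sigma=\emptyset$, I first want to understand what $\ell$ looks like relative to the order-four group $\langle\sigma\rangle$. Because $\Lb=p_\ell(\Sigma)$ is scattered of rank $4$ (so it has exactly $q^3+q^2+q+1$ points), the projection map from $\Sigma$ to $\Lb$ is injective, which forces $\ell$ to be disjoint from every secant and indeed to be in "general position" with respect to $\Sigma$; in particular $\ell^\sigma\neq\ell$, since a line of $\Sigma$ (i.e. a $\sigma$-fixed line) disjoint from $\Sigma$ cannot exist in $\PG(3,q)\hookrightarrow\PG(3,q^4)$ as $\Sigma$ already spans. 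So $\ell$ is not a subspace of $\Sigma$.

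Next I would exploit the point $P=\ell\cap\Sigma'$. Since $P$ is fixed by $\sigma^2$ but (as $P\notin\Sigma$, because $\ell\cap\Sigma=\emptyset$) is not fixed by $\sigma$, the orbit of $P$ under $\langle\sigma\rangle$ is $\{P,P^\sigma\}$ with $P\neq P^\sigma$ and $P^{\sigma^2}=P$. Consider the line $m=\langle P,P^\sigma\rangle$. This line is $\sigma$-invariant, hence $m^\sigma=m$, so $m$ is a subspace of $\Sigma$, i.e. $m$ meets $\Sigma$ in a line of $\Sigma$; call it the $\Fq$-line $r=m\cap\Sigma$, which has $q+1$ points. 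Now both $\ell$ and $m$ pass through $P$. The key geometric tension is that $\ell$ is disjoint from $\Sigma$ while $m$ is full of points of $\Sigma$, and they share the point $P$ which lies on $\Sigma'$ but on neither $\ell\cap\Sigma$ nor anywhere in $\Sigma$.

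The main step is then to derive a contradiction from counting or from the structure of the projection. One clean route: since $m$ is a subspace of $\Sigma$ and $\ell\cap m\ni P$, the plane $\pi=\langle\ell,m\rangle$ (if $\ell\neq m$, which holds since $\ell\cap\Sigma=\emptyset\neq m\cap\Sigma$) is a plane through $P$; one studies $\pi\cap\Sigma'$ and $\pi\cap\Sigma$. I expect that $\pi$ is either a subspace of $\Sigma'$ or not, and in the former case $\pi\cap\Sigma'$ is a subplane of order $q^2$ containing $r$ and the point $P$, which then meets $\ell$ in more structure than allowed; in the latter case $\pi\cap\Sigma'$ is a line or a Baer-type configuration, again meeting $\ell$. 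The cleanest contradiction I anticipate comes from the fact that $\Sigma\subseteq\Sigma'$ and $\ell$ disjoint from $\Sigma$ but meeting $\Sigma'$ means the projection $p_\ell$ restricted to $\Sigma'$ is defined on all of $\Sigma$ but $\ell$ "interferes" with $\Sigma'$: one can then consider $L'=p_\ell(\Sigma')$, an $\F_{q^2}$-linear set, and show that the interference point $P$ forces a point of $L'$ (equivalently of $\Lb$, since $\Lb=p_\ell(\Sigma)\subseteq L'$) to be missing or to have excess weight, contradicting that $\Lb$ is scattered of full size $q^3+q^2+q+1$.

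The hard part will be pinning down exactly which configuration arises for $\pi\cap\Sigma'$ and $m\cap\Sigma$ and showing it genuinely contradicts scatteredness rather than merely looking suspicious; concretely, I expect to need the fact (from \cite{LuPo2004} and the theory of projections of subgeometries) that $p_\ell$ maps $\Sigma$ bijectively onto a set of size $q^3+q^2+q+1$ precisely because $\ell$ meets no secant plane of $\Sigma$ in a line, together with a careful analysis of how $\sigma$ permutes the points of $\ell$, to conclude that a point of $\ell$ fixed by $\sigma^2$ but not $\sigma$ cannot exist. A back-up approach, avoiding case analysis, is purely group-theoretic: $\sigma$ acts on the line $\ell$ as an element of order dividing $4$ of $\PGL(2,q^4)$; if $\ell$ contained a point of $\Sigma'$ then $\sigma^2$ would fix that point of $\ell$, but since $\ell$ is not a subspace of $\Sigma$ the action of $\sigma$ on $\ell$ is semilinear of order exactly $4$ with no $\sigma$-fixed point, and one checks (using $\gcd$ considerations on the induced map $\F_{q^4}^2\to\F_{q^4}^2$ over the relevant subfield) that such an action also has no $\sigma^2$-fixed point on $\ell$ — which is exactly the claim. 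I would try this group-theoretic argument first, as it is shortest, and fall back to the geometric counting if a subtlety in the semilinear action blocks it.
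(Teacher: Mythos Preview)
Your opening moves match the paper's proof exactly: assume $P\in\ell\cap\Sigma'$, note $P\neq P^\sigma$ (since $P\notin\Sigma$), and form $m=\la P,P^\sigma\ra$, which is $\sigma$-invariant and hence a line of $\Sigma$. At this point the paper finishes in one sentence, and you miss it: since $P\in\ell\cap m$, the line $m$ meets the vertex $\ell$, so the projection $p_\ell$ collapses the entire line $m$ to a single point of the axis. In particular the $q+1$ points of $m\cap\Sigma$ all have the same image, giving a point of $\Lb$ of weight at least two and contradicting scatteredness. There is no need to pass to the plane $\pi=\la\ell,m\ra$, analyse $\pi\cap\Sigma'$, or invoke $L'=p_\ell(\Sigma')$; everything you label as ``the hard part'' evaporates once you notice that a line through the vertex projects to a point.

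Your back-up group-theoretic route has a genuine error: you speak of ``the action of $\sigma$ on $\ell$'', but nothing in the hypotheses guarantees $\ell^\sigma=\ell$. The vertex $\ell$ is an arbitrary line disjoint from $\Sigma$, and in general $\sigma$ moves it; so there is no induced order-four semilinear map on $\ell$ to analyse, and the argument about $\sigma^2$-fixed points on $\ell$ does not get off the ground.
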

\begin{proof}
  Assume the contrary, that is, there exists a point $P$ in $\ell\cap\Sigma'$.
  Then $P^{\sigma^2}=P$, the subspace $\ell_P=\la P,P^\sigma\ra$
  is a line, and satisfies $\ell_P^\sigma=\ell_P$, whence
  $\ell_P$ is a line of $\Sigma$.
  This implies that $ p_{\ell}(\ell_P)$ is a point, and $\mathbb L$ is not scattered.
\end{proof}

Let $\cK$ and $\cK'$ be the Klein quadrics representing -- via the Pl\"ucker embedding $\wp$ --
the lines of $\Sigma$ and $\Sigma'$.
In order to precisely define $\wp$, take coordinates in $\PG(3,q^4)$ such that $\Sigma$
(resp.\ $\Sigma'$) is the set of all points with coordinates rational over $\Fq$ (resp.\ $\F_{q^2})$, and define the image $r^\wp$ of any line $r$ through minors of order two in the usual way.
Then $\cK=\cK'\cap\PG(5,q)$ by considering $\PG(5,q)$ as a subset of $\PG(5,q^2)$.
The only nontrivial element of the subgroup of $\PGaL(6,q^2)$ fixing $\PG(5,q)$ pointwise is
  \begin{equation}\label{e:tau}
    \tau:\la(x_0,x_1,x_2,x_3,x_4,x_5)\ra_{\F_{q^2}}\mapsto\la(x_0^q,x_1^q,x_2^q,x_3^q,x_4^q,x_5^q)\ra_{\F_{q^2}}.
  \end{equation}
Then $\cK_2^\tau =\cK_2$, and $\sigma\wp=\wp\tau$.

\begin{proposition}\label{l:ellittica}
  Let $S$ be a solid in $\PG(5,q^2)$ such that $(i)\ S\cap\cK'\cong Q^-(3,q^2)$,
  $(ii)\ S\cap\cK=\emptyset$. Then $S\cap S^\tau\cap\cK'$ is a set of two distinct points forming an orbit of $\tau$.
\end{proposition}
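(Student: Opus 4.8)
The plan is to study the subspace $T:=S\cap S^\tau$ directly and show that it is a line, external to $\cK$, meeting $\cK'$ in a conjugate pair of points. First, since $\tau$ is an involution we have $T^\tau=S^\tau\cap S=T$, so $T$ is $\tau$-invariant; because $\tau$ acts on coordinates by the $q$-Frobenius (see \eqref{e:tau}), a $\tau$-invariant subspace is defined over $\Fq$: the associated $\F_{q^2}$-vector space is Frobenius-stable, hence spanned by vectors with entries in $\Fq$, so $T=\la T_0\ra_{\F_{q^2}}$ with $T_0$ a subspace of $\PG(5,q)$ and $\dim T_0=\dim T$. Intersecting two solids in $\PG(5,q^2)$ gives $\dim T\ge 3+3-5=1$, so $T_0$ is at least a line.

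Next I would use hypothesis $(ii)$. Since $T_0\subseteq S$ and $\cK=\cK'\cap\PG(5,q)$, we get $T_0\cap\cK\subseteq S\cap\cK=\emptyset$, so $T_0$ is external to the Klein quadric $\cK$, which is a hyperbolic quadric $Q^+(5,q)$. As every quadratic form in at least three variables over $\Fq$ is isotropic, every subspace of $\PG(5,q)$ of projective dimension $\ge 2$ meets $\cK$; hence $\dim T_0\le 1$, so $T=S\cap S^\tau$ is a line $\ell^*$, the $\F_{q^2}$-extension of a line $T_0$ of $\PG(5,q)$ external to $\cK$. Equivalently, the Pl\"ucker form restricts to an \emph{anisotropic} binary form on the corresponding $2$-dimensional $\Fq$-subspace. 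This is the step I expect to carry the real content: since the Pl\"ucker relation has coefficients in $\Fq$, the quadric $\cK'$ is precisely the $\F_{q^2}$-extension of $\cK$, and an anisotropic binary quadratic form over $\Fq$ becomes hyperbolic over $\F_{q^2}$ — for $q$ odd its discriminant is a nonsquare of $\Fq$, hence a square in $\F_{q^2}$; for $q$ even the relevant Artin--Schreier invariant vanishes in the quadratic extension. Thus the extended form has exactly two distinct projective zeros, i.e. $S\cap S^\tau\cap\cK'=\ell^*\cap\cK'$ consists of exactly two points; this is consistent with $(i)$, $\ell^*$ being then a secant line to the elliptic quadric $S\cap\cK'\cong Q^-(3,q^2)$.

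Finally I would determine the $\tau$-action on this pair. The set $\ell^*\cap\cK'$ is $\tau$-invariant, because both $\ell^*$ and $\cK'$ are (the latter since $\cK'$ is defined over $\Fq$). If $\tau$ fixed each of the two points, both would lie in the fixed locus of $\tau$, namely $\PG(5,q)$, hence in $\cK'\cap\PG(5,q)=\cK$; but they also lie on $\ell^*\subseteq S$, contradicting $S\cap\cK=\emptyset$. So $\tau$ interchanges the two points, and $S\cap S^\tau\cap\cK'$ is a single $\tau$-orbit of size two, as required. The only input that is not pure linear algebra is the behaviour of an anisotropic binary $\Fq$-form under the quadratic extension; beyond that the argument is a dimension count, and the only care needed is in bookkeeping the three coupled objects $\cK$, $\cK'$, $\tau$ and invoking $(ii)$ both to cut $S\cap S^\tau$ down to a line and to force the two points of $\ell^*\cap\cK'$ to be conjugate.
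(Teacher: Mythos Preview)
Your proof is correct and follows essentially the same route as the paper: show that $S\cap S^\tau$ is a $\tau$-invariant subspace, hence defined over $\Fq$; use that any plane of $\PG(5,q)$ meets $\cK$ (together with the Grassmann dimension bound) to force it to be a line external to $\cK$; deduce that this line meets $\cK'$ in two points; and conclude that $\tau$ must swap them since a $\tau$-fixed point would lie in $\cK$. Your write-up supplies more justification than the paper does (the Chevalley--Warning step, the anisotropic binary form becoming hyperbolic over $\F_{q^2}$, and the explicit reason the two points cannot be individually $\tau$-fixed), but the structure and key ideas are the same.
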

\begin{proof}
  If $\dim(S\cap S^\tau)\ge 2$, then $S\cap S^\tau$ contains a plane of $\PG(5,q)$.
  Each plane of $\PG(5,q)$ meets $\cK$ in at least one point of $\PG(5,q)$, contradicting $(ii)$.
  Then $r=S\cap S^\tau$ is a line fixed by $\tau$, so it is a line of $\PG(5,q)$.
  This $r$ is external to the Klein quadric $\cK$ by $(ii)$, hence it meets $\cK'$ in two points.
  Since both of $\cK'$ and $r$ are fixed by $\tau$ the assertion follows.
\end{proof}

\begin{proposition}\label{t:two}
  There is a line $r$ in $\PG(3,q^4)$, such that
  $r$ and $r^\sigma$ are skew lines both meeting $\ell$,
  and $r^{\sigma^2}=r$.
\end{proposition}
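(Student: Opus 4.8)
The plan is to apply Proposition~\ref{l:ellittica} to the solid $S$ of $\PG(5,q^2)$ which, through $\wp$, represents the lines of $\Sigma'$ meeting $\ell$. Regarding $\wp$ as defined over $\F_{q^4}$, let $\ell^\wp$ be the point of the Klein quadric over $\F_{q^4}$ associated with $\ell$ and let $(\ell^\wp)^\perp$ be its tangent hyperplane with respect to the polar form of the Klein quadratic form; set $S=(\ell^\wp)^\perp\cap\PG(5,q^2)$. Since a line $m$ of $\Sigma'$ meets $\ell$ in $\PG(3,q^4)$ if and only if $m^\wp\in(\ell^\wp)^\perp$, the set $S\cap\cK'$ consists exactly of the points of $\cK'$ representing the lines of $\Sigma'$ that meet $\ell$; likewise $S\cap\cK$ represents the lines of $\Sigma$ meeting $\ell$.

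The first thing to settle is that $S$ is a solid and that hypothesis $(ii)$ of Proposition~\ref{l:ellittica} holds. By Proposition~\ref{p:gamma-fuori}, $\ell\cap\Sigma'=\emptyset$, so $\ell$ is not a line of $\Sigma'$; hence $\ell^\wp\notin\PG(5,q^2)$, the linear form defining $(\ell^\wp)^\perp$ is not $\F_{q^4}$-proportional to a form over $\F_{q^2}$, and therefore $\dim S=3$. For $(ii)$: a line $m$ of $\Sigma$ meeting $\ell$ would span a plane with $\ell$, and then $p_\ell$ would collapse the $q+1$ points of $m\cap\Sigma$ (none of which lies on $\ell$, because $\ell\cap\Sigma=\emptyset$) to a single point, so $\Lb$ would not be scattered. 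Thus $S\cap\cK=\emptyset$.

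The core step is hypothesis $(i)$, that $S\cap\cK'\cong Q^-(3,q^2)$, and I would get it by computing $|S\cap\cK'|=q^4+1$. Write $\PG(3,q^4)=\PG(V,\F_{q^4})$ and let $L\le V$, $V'\le V$ (an $\F_{q^2}$-subspace with $\la V'\ra_{\F_{q^4}}=V$) correspond to $\ell$, $\Sigma'$. From $\ell\cap\Sigma'=\emptyset$ one gets $V'\cap L=0$, whence $V'\oplus L=V$ over $\F_{q^2}$ by counting dimensions; consequently $p_\ell$ maps $\Sigma'$ \emph{onto} $\PG(1,q^4)$, and for every $Q\in\PG(1,q^4)$ the fibre $p_\ell^{-1}(Q)\cap\Sigma'$ is the $\F_{q^2}$-linear set of the two-dimensional $\F_{q^2}$-space $V'\cap\tilde\pi_Q$, where $\tilde\pi_Q\le V$ is the subspace over the plane $\pi_Q$ joining $\ell$ to $Q$; so the fibre is either a single point of $\Sigma'$ or a line of $\Sigma'$, and in the latter case it meets $\ell$ (being coplanar with it). Since the $q^4+1$ fibres partition the $|\Sigma'|=(q^2+1)(q^4+1)$ points of $\Sigma'$, every fibre must in fact be a full line of $\Sigma'$ meeting $\ell$, and distinct values of $Q$ give distinct lines; hence there are precisely $q^4+1$ lines of $\Sigma'$ meeting $\ell$, i.e. $|S\cap\cK'|=q^4+1$. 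Among the quadrics of $\PG(3,q^2)$ only the elliptic one has $q^4+1$ points, so $S\cap\cK'\cong Q^-(3,q^2)$.

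Finally, Proposition~\ref{l:ellittica} yields $S\cap S^\tau\cap\cK'=\{p_1,p_2\}$ with $p_2=p_1^\tau$. Let $r$ be the line of $\PG(3,q^4)$ with $r^\wp=p_1$; since $p_1\in\PG(5,q^2)$, $r$ is a line of $\Sigma'$, so $r^{\sigma^2}=r$. By $\sigma\wp=\wp\tau$, the Pl\"ucker image of $r^\sigma$ is $p_1^\tau=p_2\ne p_1$, so $r^\sigma\ne r$; and $p_1\in S\subseteq(\ell^\wp)^\perp$ while $p_2=p_1^\tau\in S$ because $p_1\in S^\tau$, so both $r$ and $r^\sigma$ meet $\ell$. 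They are skew: if they met, they would be coplanar, so $\la p_1,p_2\ra$ would lie on $\cK'$, hence inside $S\cap\cK'\cong Q^-(3,q^2)$ --- impossible, as an elliptic quadric contains no line. This gives the required $r$. The main obstacle is the count $|S\cap\cK'|=q^4+1$: the bound $\le q^4+1$ is automatic from the description of the fibres, but excluding single-point fibres --- equivalently, showing that \emph{every} fibre of $p_\ell|_{\Sigma'}$ is a whole line of $\Sigma'$ --- is the delicate point and is precisely what forces $S\cap\cK'$ to be elliptic rather than a hyperbolic quadric or a cone.
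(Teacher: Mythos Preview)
Your proof is correct and follows the same overall strategy as the paper: pass to the Pl\"ucker setting, single out the solid $S$ in $\PG(5,q^2)$ whose $\cK'$-section consists of the lines of $\Sigma'$ meeting $\ell$, verify the hypotheses of Proposition~\ref{l:ellittica}, and read off $r$ from one of the two resulting points. Your description $S=(\ell^\wp)^\perp\cap\PG(5,q^2)$ and the paper's (the span of the Pl\"ucker image of the regular spread $\cF$ that $\ell$ induces on $\Sigma'$) pick out the same solid.

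The only substantive difference is in checking hypothesis~$(i)$. The paper notes that, since $\ell\cap\Sigma'=\emptyset$, the lines $\la P,P^{\sigma^2}\ra$ for $P\in\ell$ form a regular (Desarguesian) spread of $\Sigma'$, and then invokes the classical fact that the Pl\"ucker image of a regular spread is an elliptic quadric $Q^-(3,q^2)$; this settles $(i)$ in one line. You instead count $|S\cap\cK'|=q^4+1$ via the fibres of $p_\ell|_{\Sigma'}$ and identify the quadric type by its size. Your route is more self-contained; the paper's is shorter. Incidentally, the step you flag as ``the main obstacle'' dissolves on closer inspection: from $L\subseteq\tilde\pi_Q$ and $L\cap V'=0$ one gets $\dim_{\F_{q^2}}(V'\cap\tilde\pi_Q)\le 6-4=2$, which combined with the dimension-formula lower bound $\ge 2$ shows every fibre is exactly a line of $\Sigma'$ (and in a canonical subgeometry no such line collapses to a single point of $\PG(3,q^4)$). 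So your ``point or line'' dichotomy never actually occurs, and the counting argument is not needed.
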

\begin{proof}
  Let $\Sigma$ and $\Sigma'$ be as in Proposition \ref{p:gamma-fuori}.
  Since $\ell\cap\Sigma'=\emptyset$, $\ell$ defines a regular (Desarguesian) spread $\cF$ of $\Sigma'$.
  The lines of $\cF$ are all lines $\la P, P^{\sigma^2}\ra\cap\Sigma'$ where $P\in\ell$.
  The image $\cF^\wp$ under the Pl\"ucker embedding of $\cF$ is an elliptic quadric
  $S\cap\cK'\cong Q^-(3,q^2)$ in $\PG(5,q^2)$, $S$ a solid.
  Since $\Lb$ is scattered, there is no line of $\cF$ fixed by $\sigma$, whence
  $S\cap\cK=\emptyset$.
  Then the assertion follows from Proposition \ref{l:ellittica}.
\end{proof}

\begin{theorem}\label{t:ess-unico}
  Any maximum scattered linear $\Fq$-linear set in $\PG(1,q^4)$ is projectively equivalent to
  $\lpo$ for some $b\in\Fqq$, $\norma(b)\neq1$.
\end{theorem}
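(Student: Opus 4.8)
The plan is to exploit the geometric configuration set up in Propositions 3.4–3.6. Let $\Lb = p_\ell(\Sigma)$ be the given maximum scattered linear set (not of pseudoregulus type, so $q>2$), with $\Sigma$ a $q$-order canonical subgeometry of $\PG(3,q^4)$ and $\ell\cap\Sigma=\emptyset$. By Proposition \ref{t:two} there is a line $r$ in $\PG(3,q^4)$ with $r$ and $r^\sigma$ skew, both meeting $\ell$, and $r^{\sigma^2}=r$. The first step is to choose coordinates adapted to this configuration: since $r$ and $r^\sigma$ are skew and $\sigma$-conjugate, and $\ell$ is a transversal to both, I would pick a basis $e_0,e_1,e_2,e_3$ of the underlying $\F_{q^4}$-vector space so that $r = \la e_0,e_1\ra$, $r^\sigma = \la e_2,e_3\ra$, and $\ell$ is spanned by two vectors of the shape $e_0+e_2$-type, i.e.\ $\ell$ is a transversal line of the regulus through $r,r^\sigma$. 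Because $\sigma$ has order four and swaps $r\leftrightarrow r^\sigma$ while fixing $\Sigma$ pointwise, the matrix of $\sigma$ (together with the Frobenius $x\mapsto x^q$) in this basis is forced into an essentially unique anti-block-diagonal form; here one uses that $\Sigma$ being $q$-order canonical means $\sigma$ acts on $\PG(3,q^4)$ as a semilinear map of order four whose fixed points are exactly $\PG(3,q)$ in suitable coordinates, so $\sigma$ is (up to $\PGL$) the map $(x_0,x_1,x_2,x_3)\mapsto (x_2^q, x_3^q, x_0^q, x_1^q)$ or a twist of it by a diagonal matrix that can be normalized.

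The second step is to read off $\Sigma$ and the projection explicitly. With $\sigma$ in the normalized form above, the subgeometry $\Sigma$ (its points being those with $P^\sigma=P$) consists of points $\la(x_0,x_1,x_0^q,x_1^q)\ra$ — wait, more precisely the fixed points of $\sigma$ give a parametrization of $\Sigma$ by pairs $(x_0,x_1)\in\F_{q^4}^2$ modulo scalars with the last two coordinates determined, say $(x_0,x_1,\varphi(x_0,x_1))$ where $\varphi$ is a semilinear map. Then projecting from $\ell$ onto a suitable axis line $\PG(1,q^4)$ sends such a point to $\la(x_0, \text{(linear combo of } x_0^q, x_1, x_1^{q^3}))\ra$ after eliminating the $\ell$-direction. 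Imposing that the two coordinates $x_0,x_1$ are linked through the single constraint coming from $\Sigma$ being $3$-dimensional over $\F_q$ (not $4$) collapses the parametrization to a single variable $u\in\F_{q^4}$, and the image point becomes $\la(u, bu^q + cu^{q^3})\ra$ for scalars $b,c\in\F_{q^4}$ depending on the normalization. One then absorbs $c$ by a further diagonal change of coordinates on the target $\PG(1,q^4)$ (replacing $u$ by $\lambda u$ rescales $b,c$), reducing to $c=1$, i.e.\ to the form $U(b)$ of \eqref{e:ug}. Finally, since $\Lb$ is scattered, part (1) of Theorem \ref{c:classes} — or rather its converse direction, established in the proof of Theorem \ref{c:classes} — forces $\norma(b)\neq 1$.

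The main obstacle I expect is the bookkeeping in the second step: showing that the projection really does take the clean two-variable form and that, after all admissible coordinate changes (on source $\PG(3,q^4)$ fixing the configuration $\{r,r^\sigma,\ell,\Sigma\}$, and on target $\PG(1,q^4)$), every possibility is genuinely conjugate to some $U(b)$. In particular one must verify that no "cross terms" like $u^{q^2}$ survive — this is where $r^{\sigma^2}=r$ (not just $r^{\sigma^2}$ arbitrary) is essential, as it pins down the $\sigma^2$-action and kills the $q^2$-Frobenius component. I would handle this by working with the companion-type matrix for the $q$-order subgeometry: a canonical subgeometry skew to a line corresponds, via Lunardon–Polverino \cite{LuPo2004}, to a pair of $q$-linearized maps, and the condition that $r,r^\sigma$ are skew transversals of $\ell$ translates into the linearized polynomial having exactly the two monomials $x^q$ and $x^{q^3}$. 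A secondary subtlety is confirming that the $c=1$ normalization is attainable, i.e.\ that $c\neq 0$ (otherwise one would be in the pseudoregulus case, already excluded) and that the rescaling does not reintroduce a norm-one $b$; this follows because $\norma(b)$ and $\norma(c)$ transform the same way under $u\mapsto\lambda u$, so their ratio is an invariant, and scatteredness via Theorem \ref{c:classes} guarantees $\norma(b/c)\neq 1$ in the end.
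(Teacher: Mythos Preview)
Your proposal uses the right ingredients (the line $r$ from Proposition~\ref{t:two}, the transversality of $\ell$ to $r$ and $r^\sigma$), but your coordinate strategy is inverted relative to the paper's, and this inversion is where your second step breaks down. You try to adapt coordinates to $r$ and $r^\sigma$ first and then recover $\Sigma$; the paper does the opposite: it fixes $\Sigma$ in the standard form $\Sigma=\{\la(u,u^q,u^{q^2},u^{q^3})\ra_{\Fqq}:u\in\Fqq^*\}$ with $\sigma$ the cyclic shift $\la(x_0,x_1,x_2,x_3)\ra\mapsto\la(x_3^q,x_0^q,x_1^q,x_2^q)\ra$, and only then normalizes the position of $C=\ell\cap r$. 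Because the stabilizer of $\Sigma$ in $\PGL(4,q^4)$ is transitive on points $C$ with $\la C,C^\sigma,C^{\sigma^2},C^{\sigma^3}\ra=\PG(3,q^4)$ (this is quoted from \cite{BoPo2005}), one may take $C=\la(0,0,1,0)\ra$; since $\ell$ also meets $r^\sigma=\la C^\sigma,C^{\sigma^3}\ra$, one gets $\ell\subset\la C,C^\sigma,C^{\sigma^3}\ra$, hence $\ell=\la(0,0,1,0),(0,a,0,-b)\ra$. The plane $\la\ell,P_u\ra$ then has dual coordinates $[u^{q^3}+bu^q,-bu,0,-u]$, and the $U(b)$ form drops out with no $u^{q^2}$ term and no extra normalization needed.

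In your version, the concrete gap is the description of $\Sigma$. With your $\sigma\colon(x_0,x_1,x_2,x_3)\mapsto(x_2^q,x_3^q,x_0^q,x_1^q)$, the fixed-point set is \emph{not} $\{\la(x_0,x_1,x_0^q,x_1^q)\ra\}$ (that set has the wrong size and is not even $\sigma$-fixed in general), and there is no ``single constraint coming from $\Sigma$ being $3$-dimensional over $\Fq$'' that collapses two parameters to one: $\Sigma$ is a full $\PG(3,q)$, not a hypersurface. What actually produces the one-variable parametrization is the normal-rational-curve model $(u,u^q,u^{q^2},u^{q^3})$, which you have discarded by choosing coordinates adapted to $r,r^\sigma$ instead. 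Your instinct that $r^{\sigma^2}=r$ is what kills the $u^{q^2}$ term is correct, but in the paper this is encoded simply by the fact that $\ell\subset\la C,C^\sigma,C^{\sigma^3}\ra$ misses the $C^{\sigma^2}$-coordinate; no separate ``cross-term'' analysis is required. Finally, your $c$-normalization and the $\norma(b)\neq1$ conclusion are fine, but they become unnecessary once the paper's coordinate choice is used.
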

\begin{proof}
  The set $L_{U(0)}$ is a linear set of pseudoregulus type.
  Now assume that $\Lb=p_\ell(\Sigma)$ is maximum scattered, not of pseudoregulus type.
  Coordinates $X_0,X_1,X_2,X_3$ in $\PG(3,q^4)$ can be chosen such that
  \begin{equation}\label{e:C0}
    \Sigma=\{\la(u,u^q,u^{q^2},u^{q^3})\ra_{\Fqq}: u\in\Fqq^*\},
  \end{equation}
  and a generator of the subgroup of $\PGaL(4,q^4)$ fixing $\Sigma$ pointwise is
  \begin{equation}\label{e:sigma}
    \sigma:\la(x_0,x_1,x_2,x_3)\ra_{\Fqq}\mapsto
    \la(x_3^q,x_0^q,x_1^q,x_2^q)\ra_{\Fqq}.
  \end{equation}
  Define $C=\ell\cap r$, where $r$ is as in
  Proposition \ref{t:two}. The points $C$ and $C^{\sigma^2}$ lie on $r$, as well as the points
  $C^\sigma$ and $C^{\sigma^3}$ lie on $r^\sigma$.
  By Proposition \ref{p:gamma-fuori}, $C\neq C^{\sigma^2}$ and $C^\sigma\neq C^{\sigma^3}$.
  This implies $\ell\subset\la C,C^\sigma,C^{\sigma^3}\ra$, and $\la C,C^\sigma,C^{\sigma^2},C^{\sigma^3}\ra=\PG(3,q^4)$.
  Since the stabilizer of $\Sigma$ in $\PGL(4,q^4)$ acts transitively on the
  points $C$ of $\PG(3,q^4)$ such that
  $\la C,C^\sigma,C^{\sigma^2},C^{\sigma^3}\ra=\PG(3,q^4)$
  \cite[Proposition 3.1]{BoPo2005}, it may be assumed that $C=\la(0,0,1,0)\ra_{\Fqq}$, whence
  \[
    \ell=\la(0,0,1,0),(0,a,0,-b)\ra_{\Fqq},
  \]
  for some $a,b\in\Fqq$, not both of them zero. If $a=0$, then $\Lb$ is of pseudoregulus type
  \cite[Theorem 2.3]{CsZa2016a}, so $a=1$ may be assumed.
  For any point $P_u=\la(u,u^q,u^{q^2},u^{q^3})\ra_{\Fqq}$ in $\Sigma$, the plane
  containing $\ell$ and $P_u$ has coordinates $[u^{q^3}+bu^q,-bu,0,-u]$, and this leads to the desired form
  for the coordinates of $\Lb$.
\end{proof}

\section{Orbits}\label{s:orbits}

Analogously to the definition of the $\GamL$-class of linear sets (cf.\ Definition 2.4 in \cite{CsMaPo2016}) we define the $\GL$-class, which will be needed to study $\PGL(2,q^4)$-equivalence. Note that for any scattered $\Fq$-linear set the maximum field of linearity is $\Fq$.

\begin{definition}
  Let $L_U$ be an $\Fq$-linear set of $\PG(1,q^t)$ of rank $t$ with maximum field of linearity $\Fq$.  We say that $L_U$ is of \emph{$\GamL$-class} $s$ [resp.\ \emph{$\GL$-class} $s$] if $s$ is the largest integer such that
  there exist $\Fq$-subspaces $U_1$, $U_2$, $\ldots$, $U_s$ of $\F_{q^t}^2$ with $L_{U_i}=L_U$
  for $i\in\{1,2,\ldots,s\}$ and there is no $\varphi\in\GamL(2,q^t)$ [resp.\ $\varphi\in\GL(2,q^t)$]
  such that $U_i=U_j^\varphi$ for each $i\neq j$, $i,j\in\{1,2,\ldots,s\}$.
\end{definition}

The first part of the following result is \cite[Theorem 4.5]{CsMaPo2016}, while the second part follows from its proof.
We briefly summarize the main steps of the proof from \cite{CsMaPo2016}.

\begin{sloppypar}
\begin{theorem}{\cite[Theorem 4.5]{CsMaPo2016}}
\label{CMP45}
  Each $\Fq$-linear set of rank four in $\PG(1,q^4)$, with maximum field of linearity $\Fq$, is of $\GamL$-class one.
  More precisely, if $L_U=L_V$ for some 4 dimensional $\Fq$-subspaces $U$, $V$ of $\Fqq^2$, then there exists
  $\varphi\in\GamL(2,q^4)$ such that $U^\varphi=V$. Also, $\varphi$ can be chosen such that it has companion automorphism either the identity, or $x \mapsto x^{q^2}$.
\end{theorem}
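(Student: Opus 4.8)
The plan is to follow the strategy of \cite{CsMaPo2016}. First I would reduce the equality $L_U=L_V$ to an equality of linear sets in graph form. Since $L_U$ has rank four and maximum field of linearity $\Fq$, $|L_U|\le q^{3}+q^{2}+q+1<q^{4}+1$, so some point $P_0$ of $\PG(1,q^{4})$ lies outside $L_U$; as $L_U=L_V$, a single $\rho\in\PGL(2,q^{4})$ with $\rho(P_0)=\la(0,1)\ra_{\Fqq}$, lifted to $\GL(2,q^{4})$, reduces $U$ and $V$ simultaneously. Hence we may assume $\la(0,1)\ra_{\Fqq}\notin L_U=L_V$, so that $U=U_f:=\{(x,f(x)):x\in\Fqq\}$ and $V=U_g:=\{(x,g(x)):x\in\Fqq\}$ for $q$-polynomials $f,g$ over $\Fqq$ with $L_{U_f}=L_{U_g}$. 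Deleting the $x$-term of $f$ is a $\GL(2,q^{4})$-substitution, which changes neither the $\GamL$-class nor the problem, so in the analysis we may take $f$ normalised; the hypothesis on the field of linearity forbids $f$ from being $\F_{q^{2}}$-linear, i.e.\ of the shape $a_2x^{q^{2}}$.

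Next I would rewrite $L_{U_f}=L_{U_g}$ as a spectral identity. Since $U_f$ is a graph, $L_{U_f}=\{\la(1,f(x)/x)\ra_{\Fqq}:x\in\Fqq^{*}\}$, and likewise for $U_g$; so $L_{U_f}=L_{U_g}$ holds if and only if $\{f(x)/x:x\in\Fqq^{*}\}=\{g(x)/x:x\in\Fqq^{*}\}$, equivalently: for every $c\in\Fqq$ the $\Fq$-linear maps $f-c\,\mathrm{id}$ and $g-c\,\mathrm{id}$ are simultaneously singular. A Frobenius power $x\mapsto x^{q^{j}}$ applied to $U_f$ replaces $L_{U_f}$ by its $q^{j}$-th power, so it does not by itself fix the linear set; only a composition of a Frobenius power with a $\GL(2,q^{4})$-element (a M\"obius map stabilising $L_{U_f}$) can. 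Thus the goal becomes: every $q$-polynomial $g$ with the above spectral property is obtained from $f$ by such a composition, and one can always take the companion automorphism to be $\mathrm{id}$ or $x\mapsto x^{q^{2}}$.

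Finally, fixing the normalised $f$, I would express ``$f-c$ singular for every $c\in\{f(x)/x:x\in\Fqq^{*}\}$'' through the associated Dickson matrices, obtaining polynomial conditions on the coefficients of $g$. Over $\Fqq$ these conditions leave only very few candidates $g$ — at most two orbits under $\GL(2,q^{4})$: within one orbit $U_f$ and $U_g$ are $\GL(2,q^{4})$-equivalent, and the two orbits, when both occur, are fused by an element of $\GamL(2,q^{4})$ whose companion automorphism turns out, after bookkeeping, to be $x\mapsto x^{q^{2}}$ and never $x\mapsto x^{q}$ or $x\mapsto x^{q^{3}}$. Either way $U_f$ and $U_g$ are $\GamL(2,q^{4})$-equivalent with companion $\mathrm{id}$ or $x\mapsto x^{q^{2}}$, which gives both assertions. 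The hard part is exactly this classification: there is no conceptual shortcut, one must genuinely enumerate all $q$-polynomials sharing the spectrum of a prescribed quartic $q$-polynomial, with a case distinction according to the weight distribution of $L_U$ (number and position of the points of weight two — cf.\ Theorem \ref{c:classes} — and the presence of a point of larger weight). This is precisely where the rank being $4$ is decisive: the degree bound forces the enumeration to terminate, whereas for ranks $t\ge5$ genuinely inequivalent realisations appear and the $\GamL$-class can exceed one.
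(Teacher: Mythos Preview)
Your reduction to the graph form $U_f,U_g$ and the reformulation of $L_{U_f}=L_{U_g}$ as coincidence of the value sets $\{f(x)/x\}$ and $\{g(x)/x\}$ are correct and match the paper. The gap is in the final step. You propose to classify all $g$ sharing the spectrum of $f$ by a Dickson-matrix computation with case splits on the weight distribution, and you assert that this ``terminates'' and yields at most two $\GL(2,q^4)$-orbits fused by the $q^2$-Frobenius. None of this is actually carried out, and your claim that ``there is no conceptual shortcut'' is precisely where you miss the key idea.

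The paper (summarising \cite{CsMaPo2016}) does have a conceptual shortcut: the \emph{adjoint}. Proposition~4.2 of \cite{CsMaPo2016} states that if $L_{U_f}=L_{U_g}$ then either $g(x)=f(\lambda x)/\lambda$ or $g(x)=\hat f(\lambda x)/\lambda$ for some $\lambda\in\Fqq^*$, where $\hat f$ is the adjoint of $f$ with respect to the bilinear form $\langle x,y\rangle=\Tr_{q^4/q}(xy)$. The scalar map ${\bf v}\mapsto\lambda{\bf v}$ then sends $U_g$ into one of $U_f$ or $U_{\hat f}$, so the two $\GL$-orbits you are hoping to find are exactly $U_f$ and $U_{\hat f}$. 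The companion automorphism $x\mapsto x^{q^2}$ is not an output of bookkeeping after a brute-force search: it comes directly from the explicit shape of $\hat f$ in rank four, which allows one to write down a $\kappa\in\GamL(2,q^4)$ with that companion automorphism taking $U_f$ to $U_{\hat f}$. So the structural reason for ``at most two orbits'' and for ``the Frobenius is $q^2$'' is the self-duality $f\leftrightarrow\hat f$, not a case analysis on weights. Your proposal as written is a plan for a computation whose outcome you have guessed correctly but not established; the adjoint is what turns it into a proof.
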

\end{sloppypar}
\begin{proof}
  Assume $L_U=L_V$.
  We may assume $\la(0,1)\ra_{\F_{q^4}} \notin L_U$. Then
  $U=U_f=\{(x,f(x))\colon x\in \F_{q^4}\}$ and $V=V_g=\{(x,g(x))\colon x\in \F_{q^4}\}$ for some $q$-polynomials $f$ and $g$ over $\F_{q^4}$.
  By \cite[Proposition 4.2]{CsMaPo2016}, either $g(x)=f(\lambda x)/\lambda$, or $g(x)={\hat f}(\lambda x)/\lambda$ for some $\lambda\in \F_{q^4}^*$, where
  here $\hat f$ denotes the adjoint map of $f$ with respect to the bilinear form $< x,y >:=\Tr_{q^4/q}(xy)$.
  The $\F_{q^4}$-linear map ${\bf v} \mapsto \lambda{\bf v}$ maps $U_g$ to one of $U_f$, or $U_{\hat f}$.
  In the proof of \cite[Theorem 4.5]{CsMaPo2016}, a $\kappa\in\GamL(2,q^4)$ with companion
  automorphism the identity, or $x\mapsto x^{q^2}$ is determined such that $U_f^\kappa=U_{\hat f}$.
\end{proof}

\begin{theorem}\label{GL-one}
  For any $b\in\Fqq$, $\lug$ is of $\GL$-class one.
\end{theorem}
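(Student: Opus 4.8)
The plan is to derive the statement from Theorem~\ref{CMP45} together with two short explicit computations. First I would note that $f_b(x)=bx^q+x^{q^3}$ is never $\F_{q^2}$-linear, so, exactly as in the proof of Theorem~\ref{c:classes}, $\lug$ has maximum field of linearity $\Fq$ for every $b\in\Fqq$; hence Theorem~\ref{CMP45} applies, and any four-dimensional $\Fq$-subspace $U$ of $\Fqq^2$ with $L_U=\lug$ satisfies $U^\varphi=U(b)$ for some $\varphi\in\GamL(2,q^4)$ whose companion automorphism is either the identity or $x\mapsto x^{q^2}$. Being of $\GL$-class one means precisely that all such $U$ lie in a single $\GL(2,q^4)$-orbit, and since $U(b)$ is itself one of them it suffices to show $U\sim_{\GL}U(b)$ for every such $U$. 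If the companion automorphism of $\varphi$ is the identity then $\varphi\in\GL(2,q^4)$ and there is nothing to do, so the real content is the case in which the companion automorphism is $x\mapsto x^{q^2}$.

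In that case I would introduce the collineation $\Theta\colon(x,y)\mapsto(x^{q^2},y^{q^2})$. It has the same companion automorphism as $\varphi$ and satisfies $\Theta^2=\mathrm{id}$, so the product $\varphi\Theta$ has trivial companion automorphism, i.e.\ $\varphi\Theta\in\GL(2,q^4)$. Since $U^{\varphi\Theta}=(U^\varphi)^\Theta=U(b)^\Theta$, everything reduces to identifying $U(b)^\Theta$; expanding $(bx^q+x^{q^3})^{q^2}$, using $x^{q^4}=x$, and reparametrising via $z=x^{q^2}$ gives $U(b)^\Theta=U(b^{q^2})$. Thus $U$ is $\GL(2,q^4)$-equivalent to $U(b^{q^2})$, and it only remains to connect $U(b^{q^2})$ with $U(b)$ by an element of $\GL(2,q^4)$.

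For this last step I would write down a diagonal map: for $b\neq0$ the matrix $\mathrm{diag}(b^{q^3},b^{q^2})\in\GL(2,q^4)$ sends $U(b)$ to $U(b^{q^2})$, as one checks after the substitution $z=b^{q^3}x$ together with $b^{q^4}=b$ and $b^{q^6}=b^{q^2}$; the case $b=0$ is trivial since then $b^{q^2}=0$. Conceptually this is the instance $b'^{q^2+1}=b^{q^2+1}$, with $b'=b^{q^2}$, of the fact that $U(b)$ and $U(b')$ are $\GL(2,q^4)$-equivalent whenever their norms $\normadue(b)=b^{q^2+1}$ and $\normadue(b')=b'^{q^2+1}$ over $\F_{q^2}$ agree. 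Chaining the two equivalences, $U\sim_{\GL}U(b^{q^2})\sim_{\GL}U(b)$, so all $\Fq$-subspaces representing $\lug$ form a single $\GL(2,q^4)$-orbit, which is the assertion.

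The one point that genuinely uses the structure at hand is that passing through $\Theta$ lands one on $U(b^{q^2})$ rather than on $U(b)$ itself, so one needs the separate observation that $b^{q^2}$ and $b$ have the same norm over $\F_{q^2}$ in order to get back — whereas a naive use of the adjoint $\widehat{f_b}(x)=x^q+b^{q^3}x^{q^3}$ would produce $U(b^{-q^3})$, whose norm over $\F_{q^2}$ is $(b^{q^2+1})^{-q}$ and which is therefore not $\GL(2,q^4)$-equivalent to $U(b)$ in general, so the route via Theorem~\ref{CMP45} is the correct one. The remaining care is purely bookkeeping — fixing one convention for composing semilinear maps and their companion automorphisms, which is harmless because both $\varphi$ and $\Theta$ carry the order-two companion automorphism $x\mapsto x^{q^2}$. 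The two displayed identities, $U(b)^\Theta=U(b^{q^2})$ and the diagonal map sending $U(b)$ to $U(b^{q^2})$, are routine monomial computations.
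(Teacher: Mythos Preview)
Your argument is correct and is essentially the paper's own proof: both reduce via Theorem~\ref{CMP45} to showing that $U(b)$ and $U(b)^{q^2}$ lie in one $\GL(2,q^4)$-orbit, and both use the same diagonal matrix $\mathrm{diag}(b^{q^3},b^{q^2})$ (with $b=0$ trivial). The only cosmetic difference is that you insert the intermediate identification $U(b)^\Theta=U(b^{q^2})$ and then map $U(b)$ to $U(b^{q^2})$, whereas the paper verifies directly that the diagonal matrix sends $U(b)$ to $U(b)^{q^2}$; these are the same computation read two ways.
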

\begin{proof}
  By Theorem \ref{CMP45}, if $L_{U(b)}=L_V$, then there exists $\varphi\in\GamL(2,q^4)$ such that $U(b)^\varphi=V$ and
  the companion automorphism of $\varphi$ is $x \mapsto x^{q^2}$, or the identity. In order to prove the statement it is enough to show that
  $U(b)$  and $U(b)^{q^2}=\{(x^{q^2},y^{q^2})\colon (x,y)\in U(b)\}$ lie on the same orbit of $\GL(2,q^4)$.
  If $b=0$, then $U(b)=U(b)^{q^2}$. If $b\neq 0$, then for any $u\in\Fqq$,
  \[
    \begin{pmatrix}b^{q^3}&0\\ 0&b^{q^2}\end{pmatrix}
    \begin{pmatrix}u\\ bu^q+u^{q^3}\end{pmatrix}
    =\begin{pmatrix} b^qu^{q^2}\\ b\left(b^qu^{q^2}\right)^q+\left(b^qu^{q^2}\right)^{q^3}\end{pmatrix}^{q^2}
    =\begin{pmatrix}v\\ bv^q+v^{q^3}\end{pmatrix}^{q^2},
  \]
  with $v=b^qu^{q^2}$.
\end{proof}

\begin{corollary}\label{c:GL}
  Let $b,c\in\Fqq$.
  The linear sets $\lug$ and $\luh$ are projectively equivalent if and only if $U(b)$ and ${U(c)}$ are
  in the same orbit under the action of $\GL(2,q^4)$.
\end{corollary}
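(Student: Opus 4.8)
The plan is to derive Corollary \ref{c:GL} directly from Theorem \ref{GL-one} together with Theorem \ref{CMP45}, using the standard dictionary between projective equivalence of linear sets and the action of the collineation group on the underlying $\Fq$-subspaces. First I would recall the elementary fact noted in the introduction: for $\varphi\in\GamL(2,q^4)$ with associated collineation $\tilde\varphi\in\PGaL(2,q^4)$ one has $L_{U^\varphi}=(L_U)^{\tilde\varphi}$. Hence the ``if'' direction is immediate: if $U(c)=U(b)^\varphi$ for some $\varphi\in\GL(2,q^4)$, then $\luh=L_{U(c)}=L_{U(b)^\varphi}=(\lug)^{\tilde\varphi}$ with $\tilde\varphi\in\PGL(2,q^4)$, so $\lug$ and $\luh$ are projectively equivalent.

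For the ``only if'' direction, suppose $\luh=(\lug)^{\tilde\psi}$ for some $\tilde\psi\in\PGL(2,q^4)$, induced by $\psi\in\GL(2,q^4)$. Then $L_{U(b)^\psi}=(\lug)^{\tilde\psi}=\luh=L_{U(c)}$, so $U(b)^\psi$ and $U(c)$ are two $\Fq$-subspaces defining the same linear set of $\PG(1,q^4)$ of rank four. Since $\lug$ is scattered, its maximum field of linearity is $\Fq$ (as remarked just before the definition of $\GL$-class), and the same holds for $\luh$; moreover $U(b)^\psi$ defines a scattered linear set too, since being scattered is preserved under $\GL$. Thus Theorem \ref{CMP45} applies to the pair $U(b)^\psi$, $U(c)$: there is $\varphi_0\in\GamL(2,q^4)$ with companion automorphism the identity or $x\mapsto x^{q^2}$ such that $(U(b)^\psi)^{\varphi_0}=U(c)$.

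It then remains to absorb the possible Frobenius twist. If the companion automorphism of $\varphi_0$ is the identity, then $\varphi_0\in\GL(2,q^4)$ and $U(c)=U(b)^{\varphi_0\psi}$ with $\varphi_0\psi\in\GL(2,q^4)$, and we are done. If instead the companion automorphism is $x\mapsto x^{q^2}$, write $\varphi_0=\varphi_1\circ\Phi$ where $\Phi$ denotes the semilinear map $(x,y)\mapsto(x^{q^2},y^{q^2})$ and $\varphi_1\in\GL(2,q^4)$; here I would invoke Theorem \ref{GL-one}, which gives an explicit $A\in\GL(2,q^4)$ (the diagonal matrix $\mathrm{diag}(b^{q^3},b^{q^2})$ when $b\neq0$, the identity when $b=0$) with $U(b)^{\Phi}=U(b)^{A}$, equivalently $U(b)^{\Phi}=U(b)^{A}$ so that the semilinear part can be traded for a linear one. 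Chaining this with $\psi$ and $\varphi_1$ yields an element of $\GL(2,q^4)$ carrying $U(b)$ to $U(c)$. The one point that needs a little care — and which I expect to be the only genuine subtlety — is that $\Phi$ must be applied to the \emph{correct} subspace in the chain ($U(b)$ itself, not some $\GL$-translate of it); this is handled by conjugating appropriately, using that $\Phi$ normalizes $\GL(2,q^4)$, so that $\Phi\varphi_1 = \varphi_1^{\Phi^{-1}}\Phi$ with $\varphi_1^{\Phi^{-1}}\in\GL(2,q^4)$, which lets one move $\Phi$ to act on $U(b)$ where Theorem \ref{GL-one} can be quoted verbatim.
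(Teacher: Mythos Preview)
Your argument is correct in spirit but takes a detour that the paper avoids. The paper's proof simply quotes Theorem~\ref{GL-one} as a black box: once you know $L_{U(b)^\kappa}=\luh=L_{U(c)}$, the statement that $\luh$ has $\GL$-class one says \emph{by definition} that $U(b)^\kappa$ and $U(c)$ lie in the same $\GL(2,q^4)$-orbit, and you are done in one line. You instead reopen the proof of Theorem~\ref{GL-one}, invoking Theorem~\ref{CMP45} to get a $\varphi_0$ with companion automorphism $\mathrm{id}$ or $x\mapsto x^{q^2}$, and then conjugate the Frobenius $\Phi$ past a $\GL$-element to reduce to the explicit matrix from the proof of Theorem~\ref{GL-one}. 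This works, but it duplicates the content of Theorem~\ref{GL-one} rather than using its conclusion.

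Two minor slips: first, you write $U(c)=U(b)^{\varphi_0\psi}$ in the identity-automorphism case, but since $\psi$ acts before $\varphi_0$ this should be $U(b)^{\psi\varphi_0}$. Second, you justify that the maximum field of linearity is $\Fq$ by saying ``since $\lug$ is scattered'', but the corollary is stated for all $b\in\Fqq$, including those with $\norma(b)=1$ where $\lug$ is not scattered. The correct justification (used in the proof of Theorem~\ref{c:classes}) is that $f_b(x)=bx^q+x^{q^3}$ is never $\F_{q^2}$-linear, so the maximum field of linearity is $\Fq$ regardless.
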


\begin{proof}
  The ``if'' part is obvious, so assume that $\lug^{\tilde\kappa}=\luh$ where $\kappa\in\GL(2,q^4)$.
  Then $L_{U(b)^\kappa}=\luh$ and by Theorem \ref{GL-one} there is $\kappa'\in\GL(2,q^4)$  such that $U(b)^{\kappa\kappa'}={U(c)}$.
\end{proof}

It follows that in order to classify the $\Fq$-linear sets $L_{U(b)}$ up to $\PGL(2,q^4)$ and $\PGaL(2,q^4)$-equivalence, it is enough to determine the orbits of the subspaces $U(b)$ under the actions of $\GamL(2,q^4)$ and $\GL(2,q^4)$.

\begin{theorem}
\label{t:orbits}
 Let $q$ be a power of a prime $p$.
  \begin{enumerate}[(i)]
  \item For any $b,c\in\Fqq$, $\lug$ and $\luh$ are equivalent up to an element of $\PGaL(2,q^4)$
  if and only if $c^{q^2+1}=b^{\pm p^s(q^2+1)}$ for some integer $s\ge0$.
  \item For any $b,c\in\Fqq$, the linear sets  $\lug$ and $\luh$ are projectively equivalent
  if and only if $c^{q^2+1}=b^{q^2+1}$ or $c^{q^2+1}=b^{-q(q^2+1)}$.
  \item All linear sets described in 2. of Theorem \ref{c:classes} are projectively equivalent.
  \item There are precisely $q(q-1)/2$ distinct linear sets up to projective equivalence in the family described in 1. of Theorem \ref{c:classes},
  and these are the only maximum scattered linear sets of $\PG(1,q^4)$.
  \item There are precisely $q$ distinct linear sets up to projective equivalence in the family described in 3. of Theorem \ref{c:classes}.
  \end{enumerate}
\end{theorem}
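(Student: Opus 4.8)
The plan is to use the reduction already in place: by Theorems~\ref{CMP45} and~\ref{GL-one} and Corollary~\ref{c:GL}, $\lug$ and $\luh$ are $\PGL(2,q^4)$-equivalent (resp.\ $\PGaL(2,q^4)$-equivalent) if and only if $U(b)$ and $U(c)$ lie in one orbit of $\GL(2,q^4)$ (resp.\ $\GamL(2,q^4)$), so the whole theorem amounts to determining these two orbit relations and then counting. The guiding invariant will be $n_b:=b^{q^2+1}=\normadue(b)\in\F_{q^2}$, linked to the norm over $\Fq$ by $n_b^{q+1}=\norma(b)$; note that $n_b$ runs over all of $\F_{q^2}^*$ as $b$ runs over $\Fqq^*$.

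First I would produce the explicit maps giving the ``if'' directions. A substitution shows $\mathrm{diag}(a,a^{q^3})$ sends $U(b)$ onto $U(a^{q^3-q}b)$, and since $a\mapsto a^{q^3-q}$ has image the group of $(q^2+1)$-th roots of unity in $\Fqq^*$, this puts $U(b)$ and $U(c)$ in one $\GL(2,q^4)$-orbit whenever $c^{q^2+1}=b^{q^2+1}$. For $b\ne0$, an anti-diagonal matrix (zero diagonal) sends $U(b)$ onto $U(-\beta^{q^3-q}b^{-q})$, and these realise every $U(c)$ with $c^{q^2+1}=b^{-q(q^2+1)}$; this alternative is genuinely new only when $\norma(b)\ne1$, and then $f_b$ is a bijection so the construction goes through, while if $\norma(b)=1$ the two conditions on $c$ coincide and the diagonal maps suffice. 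For the semilinear part, the coordinatewise Frobenius $x\mapsto x^{p^s}$ carries $U(b)$ to $U(b^{p^s})$, and the adjoint relation from the proof of Theorem~\ref{CMP45} combined with $\mathrm{diag}(1,b^{-q^3})$ puts $U(b)$ and $U(b^{-q^3})$ in one $\GamL(2,q^4)$-orbit, with $(b^{-q^3})^{q^2+1}=n_b^{-q}$. Together these yield $\PGaL$-equivalence whenever $c^{q^2+1}=b^{\pm p^s(q^2+1)}$, which is half of~(i) and of~(ii).

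The core is the converse for $\GL(2,q^4)$. Assume $b,c\ne0$; since $\la(0,1)\ra_{\Fqq}\notin\lug,\luh$ we have $U(b)=\{(x,f_b(x))\}$ and $U(c)=\{(x,f_c(x))\}$ with $f_b(x)=bx^q+x^{q^3}$ (and similarly $f_c$), and a $\varphi\in\GL(2,q^4)$ with $\varphi(U(b))=U(c)$, of matrix entries $a,\beta,\gamma,\delta$, makes $h\colon x\mapsto ax+\beta f_b(x)$ a bijection of $\Fqq$ and forces the polynomial identity $f_c(h(x))=\gamma x+\delta f_b(x)$. Expanding as $q$-polynomials and comparing the coefficients of $x,x^q,x^{q^2},x^{q^3}$ gives $\delta=a^{q^3}$, $\gamma a^q=\delta b$ and $c\,\beta^qb^q+\beta^{q^3}=0$. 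If $a\ne0$ then $c=a^{q^3-q}b$, hence $c^{q^2+1}=b^{q^2+1}$; if $a=0$ then $\delta=0$, $\beta\gamma\ne0$, so $c=-\beta^{q^3-q}b^{-q}$ and $c^{q^2+1}=b^{-q(q^2+1)}$. The cases $b=0$ or $c=0$ are disposed of separately: then one of the two linear sets is of pseudoregulus type, which by Theorem~\ref{c:classes} and Theorem~\ref{t:lpo} (with its extension to $q=3$) forces the other to be of pseudoregulus type too, i.e.\ the remaining parameter to vanish, in accordance with the asserted condition. This proves~(ii). Part~(i) then follows because every element of $\GamL(2,q^4)$ is a $\GL(2,q^4)$-map precomposed with a coordinatewise Frobenius, so the admissible set of $c^{q^2+1}$ enlarges to all $n_b^{p^s}$ and $n_b^{-qp^s}$; as $n_b\in\F_{q^2}^*$ and $q$ is a power of $p$, this is exactly $\{b^{\pm p^s(q^2+1)}:s\ge0\}$.

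It remains to count. By~(ii), for $b\ne0$ the $\PGL$-class of $\lug$ is determined by the unordered pair $\{n_b,n_b^{-q}\}$, and $n\mapsto n^{-q}$ is an involution of $\F_{q^2}^*$ whose fixed point set is exactly $\{n:n^{q+1}=1\}$, of size $q+1$. Item~2 of Theorem~\ref{c:classes} is the case $n_b=1$: one class, giving~(iii). Item~3 is the $q$ values with $n_b^{q+1}=1$ and $n_b\ne1$, each fixed by the involution: $q$ classes, giving~(v). Item~1 is $b=0$ (pseudoregulus type) together with the $(q^2-1)-(q+1)=(q-2)(q+1)$ values of $n_b$ with $n_b^{q+1}\ne1$, on which the involution is fixed-point-free, hence $(q-2)(q+1)/2$ classes; the pseudoregulus class is distinct from all of these by Theorem~\ref{t:lpo} and its extension (and is the only one if $q=2$), so the total is $(q-2)(q+1)/2+1=q(q-1)/2$, and by Theorem~\ref{t:ess-unico} these are precisely the maximum scattered $\Fq$-linear sets of $\PG(1,q^4)$ up to projective equivalence; this is~(iv). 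I expect the main difficulty to be the coefficient comparison in~(ii) together with the careful treatment of the boundary cases ($b$ or $c$ equal to $0$, $\norma(b)=1$ — where the two alternatives for $c^{q^2+1}$ merge and the anti-diagonal maps degenerate — and the small fields $q\in\{2,3\}$), which escape the generic computation.
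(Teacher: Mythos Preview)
Your proposal is correct and follows essentially the same route as the paper: reduce to $\GL$/$\GamL$-orbits of the $U(b)$ via Corollary~\ref{c:GL}, compare coefficients of the $q$-polynomial identity arising from a matrix equation to get the necessary conditions on $\normadue(b)$ and $\normadue(c)$, exhibit explicit diagonal and anti-diagonal (resp.\ Frobenius-twisted) maps for sufficiency, and then count pairs $\{n,n^{-q}\}$ in $\F_{q^2}^*$. The only slip is notational: in your coefficient list ``$\gamma a^q=\delta b$'' should read ``$c\,a^q=\delta b$'' (you use the correct form in the very next sentence), and the paper organises the same computation slightly differently by carrying the Frobenius parameter $\psi$ through a single system rather than doing $\GL$ first and bootstrapping to $\GamL$.
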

\begin{proof}
Take $b\in \F_{q^4}^*$. If $L_{U(b)}$ is not scattered, then it clearly cannot be equivalent to $L_{U(0)}$ (the scattered linear set of pseudoregulus type), while if $L_{U(b)}$ is scattered, then it follows from Theorem \ref{t:lpo} (and from a computer search when $q=3$) that $U(b)$ and $U(0)$ yield projectively inequivalent linear sets. Since the automorphic collineations $(x,y)\mapsto (x^{p^s},y^{p^s})$ fix $U(0)$, it also follows that $L_{U(0)}$ and $L_{U(b)}$ lie on different orbits of $\PGaL(2,q^4)$. Thus (i) and (ii) are true when one of $b$ or $c$ is zero, so from now on we may assume $b\neq 0$ and $c\neq 0$.

The sets $\lug$ and $\luh$ are equivalent up to elements of $\PGaL(2,q^4)$ if and only for some $\psi=p^k$, $k\in\mathbb N$
and some $A,B,C,D\in\Fqq$ such that $AD-BC\neq0$ the following holds:
 \begin{equation}
 \label{e:matriciale}
 \left\{\begin{pmatrix}
 A&B \\
 C&D
 \end{pmatrix}
\begin{pmatrix}
u^\psi\\
b^\psi u^{\psi q}+u^{\psi q^3}
\end{pmatrix} \colon u\in \F_{q^4}\right\}=
\left\{\begin{pmatrix}
v\\
cv^q+v^{q^3}
\end{pmatrix} \colon v\in \F_{q^4} \right\}.
\end{equation}
    Furthermore, by Corollary \ref{c:GL}, $\lug$ and $\luh$ are projectively equivalent if, and only if,
  (\ref{e:matriciale}) has a solution with $\psi=1$.
  This leads to a polynomial in $u^\psi$ of degree at most $q^3$ which is identically zero.
  Equating its coefficients to zero,
  \begin{equation}\label{e:sistema}
    \left\{\begin{array}{rcl}
      A^{q^3}-D&=&0\\ B^qb^{\psi q}c+B^{q^3}&=&0\\ A^qc-Db^\psi&=&0\\
      B^qc+B^{q^3}b^{\psi q^3}-C&=&0.
    \end{array}\right.
  \end{equation}

  Assume that $\lug$ and $\luh$ are in the same orbit of $\PGaL(2,q^4)$, and take $\psi=1$ in case
  they are also projectively equivalent.
  If $D\neq0$, then the first and third equations imply $b^\psi=D^{q^2-1}c$ and so $c^{q^2+1}=b^{\psi(q^2+1)}$.
  If $D=0$, then $BC\neq0$; from the second equation, $(b^{\psi q}c)^{q^2+1}=1$, hence
  $c^{q^2+1}=b^{-\psi q(q^2+1)}$. This proves the only if parts of (i) and (ii).

  Conversely, if $c^{q^2+1}=b^{p^s(q^2+1)}$ for some $s\in \mathbb{N}$,
  then $b^{p^s} c^{-1}=\delta^{q^2-1}$ for some $\delta\in\Fqq^*$.
  The quadruple $A=\delta^q$, $B=C=0$, $D=\delta$ with $\psi=p^s$ is a solution of (\ref{e:sistema}) with $AD-BC\neq 0$.
  This proves the if part of (i) when $c^{q^2+1}=b^{p^s(q^2+1)}$ and the if part of (ii) when $c^{q^2+1}=b^{q^2+1}$.
  If $b^{q^2+1}=c^{q^2+1}=1$, i.e. when $U(b)$ and $U(c)$ define linear sets described in 2. of Theorem \ref{c:classes}, then the above condition holds, thus
  (iii) follows. From now on we may assume $b^{q^2+1}\neq 1$ and $c^{q^2+1}\neq 1$.

  Assume $c^{q^2+1}=b^{-p^s(q^2+1)}$ for some $s\in\mathbb N$, i.e.\ $b^{p^s}c=\varepsilon^{q^2-1}$ for some $\varepsilon\in\Fqq^*$.
  Define $\psi=p^sq^3$. A $\rho\in\Fqq^*$ exists such that $\rho^{q^2-1}=-1$.
  Take $A=D=0$, $B=(\rho\varepsilon)^{q^3}$, $C=\varepsilon\rho c(1-b^{p^s(q^2+1)})$.
  If $C=0$, then $b^{q^2+1}=1$, a contradiction.
  So $AD-BC\neq0$ and (\ref{e:sistema}) has a solution.
  If $p^s=q$, then $\psi=1$, hence in this case $\lug$ and $\luh$ are projectively equivalent.
  This finishes the proofs of (i) and (ii).

  Now we prove (iv). Note that $\norma(b)=(b^{q^2+1})^{q+1}$ for any $b\in\Fq$,
  therefore, $\lug$ is a maximum scattered $\Fq$-linear set not of pseudoregulus type
  if, and only if, $b^{q^2+1}$ is an element of the set
  \[
    S=\{x\in\mathbb F_{q^2}^*\colon x^{q+1}\neq1\}.
  \]
  The orbits of point sets of type $\lug$, $b\neq0$, under the action of $\PGL(2,q^4)$
  are as many as the pairs $\{x,x^{-q}\}$ of elements in $S$. Since all such pairs are made of distinct elements,
  adding one for the linear set of pseudoregulus type, one obtains
  \[
    1+\frac{q^2-q-2}{2}=\frac{q(q-1)}{2}.
  \]
  Finally we prove (v). $\lug$ is an $\Fq$-linear set described in 3. of Theorem \ref{c:classes}
  if, and only if, $b^{q^2+1}$ is an element of the set
  \[
    Z=\{x\in\mathbb F_{q^2}\setminus\{1\} \colon x^{q+1}=1\}.
  \]
  The orbits of point sets of this type under the action of $\PGL(2,q^4)$ are as many as the pairs $\{x,x^{-q}\}$ of elements in $Z$.
  Since for each $x\in Z$ we have $x=x^{-q}$, this number is $q$.
  \end{proof}

\begin{remark}
  The number of orbits of maximum scattered linear sets under the action of $\PGaL(2,q^4)$ depends on the exponent $e$ in $q=p^e$.
  A general formula is not provided here.
  For $e=1$ each orbit which does not arise from the linear set of pseudoregulus type is related to two or four norms over $\F_{q^2}$,
  according to whether $\N_{q^4/q^2}(b)\in\Fq\setminus\{0,1,-1\}$ or not.
  This leads  (including now the linear set of pseudoregulus type) to a total number of $(q^2-1)/4$ orbits for odd $q$.
  \end{remark}

\section*{Acknowledgement} The authors of this paper thank Michel Lavrauw,
Giuseppe Marino and
Olga Polverino for useful discussions and suggestions during the development
of this research.




\begin{thebibliography}{00}



\bibitem{B2003}
{\sc S.~Ball:}
 The number of directions determined by a function over a finite field, J. Combin. Theory Ser. A
{\bf 104} (2003), 341--350.

\bibitem{BBBSSz}
{\sc S.~Ball, A.~Blokhuis, A.E.~Brouwer, L.~Storme and T.~Sz\H{o}nyi:}
On the number of slopes of the graph of a function definied over a finite field, J. Combin. Theory Ser. A
{\bf 86} (1999), 187--196.

\bibitem{Fining1}
{\sc J.~Bamberg, A.~Betten, {Ph}.~Cara, J.~De Beule, M.~Lavrauw and M.~Neunh\"{o}ffer:}
FinInG -- Finite Incidence Geometry, a GAP package. Version 1.3.3, 2016.



\bibitem{BoPo2005}
{\sc G.~Bonoli and O.~Polverino:}
$\mathbb{F}_q$-linear blocking sets in $\mathrm{PG}(2,q^4)$.
Innov.\ Incidence Geom.\ {\bf 2} (2005), 35--56.

\bibitem{CoSt1995}
{\sc A.~Cossidente and L.~Storme:}
Caps on Elliptic Quadrics. Finite Fields Appl.\ \textbf{1} (1995), 412--420.

\bibitem{Cs2017}
{\sc B.~Csajb\'ok:}
On bisecants of R\'edei type blocking sets and applications, to appear in Combinatorica, DOI: 10.1007/s00493-016-3442-6

\bibitem{CsMaPo2016}
{\sc B.~Csajb\'ok, G.~Marino and O.~Polverino:}
On the equivalence of linear sets of rank $n$ in $\PG(1,q^n)$. Submitted manuscript, 2016.
 arXiv:1607.06962

\bibitem{CsZa2016}
{\sc B.~Csajb\'ok and C.~Zanella:}
On the equivalence of linear sets. Des.\ Codes Cryptogr.\ \textbf{81} (2016), 269--281.

\bibitem{CsZa2016a}
{\sc B.~Csajb\'ok and C.~Zanella:}
On scattered linear sets of pseudoregulus type in $\PG(1,q^t)$.
Finite Fields Appl.\ \textbf{41} (2016), 34--54.


\bibitem{KM-arcs}
{\sc M.~De Boeck and G.~Van de Voorde:} {A linear set view on KM-arcs}, J. Algebraic Combin.\ \textbf{44 (1)} (2016), 131–-164.


\bibitem{jwph}
{\sc J.W.P.~Hirschfeld:} Projective Geometries over Finite Fields, 2$^{nd}$ ed., Clarendon Press, Oxford, 1998.


\bibitem{LaShZa2015}
{\sc M.~Lavrauw, J.~Sheekey and C.~Zanella:}
On embeddings of minimum dimension of $\PG(n,q)\times\PG(n,q)$.
Des.\ Codes Cryptogr.\ \textbf{74} (2015), 427-–440.


\bibitem{LaZa2015}
{\sc M.~Lavrauw and C.~Zanella:}
Subgeometries and linear sets on a projective line.
Finite Fields Appl.\ \textbf{34} (2015), 95-–106.

\bibitem{LuPo2001}
{\sc G.~Lunardon and O.~Polverino:} Blocking sets and derivable partial spreads. J.\ Algebraic Combin.\ \textbf{14} (2001), 49--56.

\bibitem{LuPo2004}
{\sc G.~Lunardon and O.~Polverino:} Translation ovoids of orthogonal polar spaces.
Forum Math.\ {\bf16} (2004), 663--669.

\bibitem{Po2010}
{\sc O.~Polverino:} Linear sets in finite projective spaces, Discrete Math.\ {\bf310} (2010), 3096--3107.

\bibitem{Sh}
{\sc J.~Sheekey:} A new family of linear maximum rank distance codes, Adv. Math. Commun. {\bf 10}(3) (2016), 475--488.

\bibitem{VV2016}
{\sc G.~Van de Voorde:}
Desarguesian spreads and field reduction for elements of the semilinear group.
Linear Algebra Appl.\ {\bf507} (2016), 96--120.

\end{thebibliography}


\bigskip

\noindent Bence Csajb\'ok\\
MTA--ELTE Geometric and Algebraic Combinatorics Research Group,\\
E\"otv\"os Lor\'and University,\\
 H--\,1117 Budapest, P\'azm\'any P\'eter S\'et\'any 1/C, Hungary\\
{\em csajbok.bence@gmail.com}

\bigskip

\noindent Corrado Zanella\\
Dipartimento di Tecnica e Gestione dei Sistemi Industriali,\\
Universit\`a di Padova, Stradella S. Nicola, 3, I-36100 Vicenza, Italy\\
{\em corrado.zanella@unipd.it}

\end{document}